\numberwithin{equation}{section}
\def\H{\mathcal H}
\def\R{\mathbb R}
\def\N{\mathbb N}
\newcommand{\dist}{\mathop{\mathrm{dist}}}
\def\e{\varepsilon}
\def\S{\Sigma}
\def\l{\lambda}
\def\g{\gamma}
\def\de{\delta}
\def\co{{\rm co}\,}
\def\pa{\partial}
\def\00{{\bf 0}}
\renewcommand{\a}{\alpha}
\renewcommand{\l}{\lambda}
\newcommand{\ov}{\overline}
\newcommand{\diam}{\mathrm{diam}}
\newcommand{\cc}{\subset\subset}
\def\Lip{{\rm Lip}\,}
\newtheorem{thm}{Theorem}[section]
\newtheorem{lem}[thm]{Lemma}
\newtheorem{prop}[thm]{Proposition}
\newtheorem{rem}[thm]{Remark}
\theoremstyle{definition}
\newtheorem{defn}[thm]{Definition}
\begin{document}

\title[Sharp quantitative concentration inequality]{The sharp quantitative \\ Euclidean concentration inequality}

\author{Alessio Figalli}
\address{Department of Mathematics, UT Austin, Austin, TX 78712}
\email{\tt figalli@math.utexas.edu}

\author{Francesco Maggi}
\address{Department of Mathematics, UT Austin, Austin, TX 78712}
\email{\tt maggi@math.utexas.edu}

\author{Connor Mooney}
\address{Department of Mathematics, UT Austin, Austin, TX 78712}
\email{\tt cmooney@math.utexas.edu}

\subjclass[2010]{52A40, 28A75}
\keywords{Brunn-Minkowski inequality, quantitative stability}

\begin{abstract}
The Euclidean concentration inequality states that, among
sets with fixed volume, balls have $r$-neighborhoods of minimal volume for every $r>0$.
On an arbitrary set, the deviation of this volume growth from that of a ball
is shown to control the square of the volume of the symmetric difference between the set
and a ball. This sharp result is strictly related to the physically significant problem of
understanding near maximizers in the Riesz rearrangement inequality with a strictly decreasing radially decreasing kernel.
Moreover, it implies as a particular case the sharp quantitative Euclidean isoperimetric inequality from \cite{fuscomaggipratelli}.
\end{abstract}

\maketitle


\section{Introduction}
\subsection{Overview} A sharp stability theory for the Euclidean isoperimetric inequality has been established in recent years in several papers \cite{Fuglede,hall,fuscomaggipratelli,maggibams,FigalliMaggiPratelliINVENTIONES,CicaleseLeonardi,fuscojulin}, and has found applications to classical capillarity theory \cite{FigalliMaggiARMA,maggimihaila}, Gamow's model for atomic nuclei \cite{knupfermuratovI,knupfermuratovII}, Ohta-Kawasaki model for diblock copolymers \cite{cicalesespadaro,goldmanmuratovserfatyI,goldmanmuratovserfatyII}, cavitation in nonlinear elasticity \cite{henaoserfaty}, and slow motion for the Allen-Cahn equation \cite{murrayrinaldi}. Depending on the application, a sharp stability theory not only conveys better estimates, but it is actually crucial to conclude anything at all.

The goal of this paper is obtaining a sharp stability result for the Euclidean concentration inequality (i.e., among all sets, balls minimize the volume-growth of their $r$-neighborhoods). As in the case of the Euclidean isoperimetric inequality, obtaining a sharp stability theory for Euclidean concentration is very interesting from the geometric viewpoint. In addition to this, a strong motivation comes from physical applications. Indeed, as explained in detail in \cite[Section 1.4]{carlenmaggi}, the Euclidean concentration inequality
is crucial in characterizing equality cases  for the physically ubiquitous Riesz rearrangement inequality
(see e.g. \cite[Theorem 3.9]{lieblossBOOK}) in the case of radially symmetric strictly decreasing kernels.

The need for a quantitative version of the Euclidean concentration inequality in the study of the Gates-Penrose-Lebowitz free energy functional arising in Statistical Mechanics \cite{lebowitzpenrose,gatespenrose} has been pointed out in \cite{CarlenCELM}. A recent progress has been achieved in \cite{carlenmaggi}, where a non-sharp quantitative version of Euclidean concentration is obtained (see below for more details), and where its application to a quantitative understanding of equality cases in Riesz rearrangement inequality is also discussed. The argument used in \cite{carlenmaggi}, however, is not precise enough to provide a {\it sharp} stability result, which is the objective of our paper. As a second example of physical applications, a quantitative understanding of the Riesz rearrangement inequality for the Coulomb energy arises in the study of dynamical stability of gaseous stars, and was recently addressed, again in non-sharp form, in \cite{burchardchambers}.

On the geometric side, we note that the Euclidean concentration inequality is equivalent to the Euclidean isoperimetric inequality. Going from isoperimetry to concentration is immediate by the coarea formula, while the opposite implication is obtained  by differentiation (see \eqref{argument} below). Despite this direct relation, the various methods developed so far in the study of sharp stability for the Euclidean isoperimetric inequality (symmetrization techniques \cite{hall,hallhaymanweitsman,fuscomaggipratelli}, mass transportation \cite{FigalliMaggiPratelliINVENTIONES}, spectral analysis and selection principles \cite{Fuglede,CicaleseLeonardi}) do not seem to be suitable for Euclidean concentration (see Section \ref{sect:relation} below). To obtain a sharp result we shall need to introduce several new geometric ideas, specific to this nonlocal setting, and to combine them with various auxiliary results, such as Federer's Steiner-type formula for sets of positive reach and the strong version of the sharp quantitative isoperimetric inequality \cite{fuscojulin}.

\subsection{Statement of the main theorem} If $E$ is a subset of $\R^n$ with (outer) Lebesgue measure $|E|$, and denote by
\[
I_r(E):=\big\{x\in\R^n:\dist(x,E)<r\big\}\qquad r>0\,,
\]
the $r$-neighborhood of $E$. Then the {\it Euclidean concentration inequality} asserts that
\begin{equation}
  \label{euclidean concentration}
  |I_r(E)|\ge|I_r(B_{r_E})|\,,\qquad\forall \,r>0\,,
\end{equation}
where
\[
r_E:=\bigg(\frac{|E|}{|B_1|}\bigg)^{1/n}=\mbox{radius of a ball of volume $|E|$}\,.
\]
It is well-known that inequality \eqref{euclidean concentration} is a non-local generalization
of the classical {\it Euclidean isoperimetric inequality}: the latter states that, for any set $E$,
its distributional perimeter $P(E)$ is larger than the one of the ball $B_{r_E}$, namely
\begin{equation}
  \label{isoperimetric inequality}
  P(E)\ge P(B_{r_E})\,.
\end{equation}
The isoperimetric inequality \eqref{isoperimetric inequality} can be deduced from \eqref{euclidean concentration}
as follows: if $E$ is a smooth bounded set then, using \eqref{euclidean concentration},
\begin{equation}
  \label{argument}
P(E)=\lim_{r\to 0^+} \frac{|I_r(E)|-|E|}{r} \geq \lim_{r\to 0^+} \frac{|I_r(B_{r_E})|-|E|}{r}=P(B_{r_E}).
\end{equation}
Then, once \eqref{isoperimetric inequality} has been obtained on smooth set,
the general case follows by approximation.

We also mention (although we shall not investigate this here) that the Gaussian counterpart of \eqref{euclidean concentration} plays a very important role in Probability theory, see \cite{ledoux}.\\

It turns out that if equality holds in \eqref{euclidean concentration} for some $r>0$ then, modulo a set of measure zero, $E$ is a ball (the converse is of course trivial). The stability problem amounts in quantifying the degree of sphericity possed by almost-equality cases in \eqref{euclidean concentration}. This problem is conveniently formulated by introducing the following two quantities
\begin{eqnarray}
  &&\delta_r(E):=\max\bigg\{\frac{r}{r_E},\frac{r_E}{r}\bigg\}\,\bigg(\frac{|I_r(E)|}{|I_r(B_{r_E})|}-1\bigg)\,,\qquad r>0\,,
  \\
  &&\a(E):=\inf\bigg\{\frac{|E\Delta B_{r_E}(x)|}{|B_{r_E}|}:x\in\R^n\bigg\}\,,\qquad B_s(x):=x+B_s\,.
\end{eqnarray}
Notice that $\a(\l\,E)=\a(E)$ and $\de_{\l\,r}(\l\,E)=\de_r(E)$ for every $r,\l>0$ and $E\subset\R^n$, with $\de_r(E)=0$ if and only if $E$ is a ball (up to a set of measure zero). The quantity $\a(E)$ is usually called the {\it Fraenkel asymmetry of $E$}. We now state our main theorem.

\begin{thm}\label{main}
 For every $n\ge 2$ there exists a constant $C(n)$ such that
 \begin{equation}
   \label{main inequality}
    \alpha(E)^2 \leq C(n)\,\delta_r(E)\,,\qquad\forall \,r>0\,,
 \end{equation}
 whenever $E$ is a measurable set with $0<|E|<\infty$; more explicitly, there always exists $x\in\R^n$ such that
 \[
 |I_r(E)|\ge|I_r(B_{r_E})|\bigg\{1+c(n)\,\min\bigg\{\frac{r}{r_E},\frac{r_E}{r}\bigg\}\,\bigg(\frac{|E\Delta B_{r_E}(x)|}{|E|}\bigg)^2\bigg\}\,,
 \]
 for some positive constant $c(n)$.
\end{thm}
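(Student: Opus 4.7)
The plan is to argue by scale invariance, setting $r_E=1$, and then to treat the two regimes $r\leq 1$ and $r\geq 1$ separately (with an intermediate window $r\in[r_0,R_0]$, say $r_0=1/2$ and $R_0=2$, handled via continuity between the endpoint estimates or a compactness argument on this compact parameter interval).

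\textbf{Small-$r$ regime.} For $r\leq r_0$, the factor $1/r$ in $\delta_r$ combined with the Minkowski-sum expansion
\[
|I_r(E)| \;=\; |E| + r\,P(E) + \mathrm{o}(r)
\]
and its analogue for $B_1$ shows that $\delta_r(E)$ majorizes, up to dimensional constants, the classical isoperimetric deficit $P(E)-P(B_1)$. The passage from asymptotic to uniform quantitative estimates uses the Brunn-Minkowski concavity of $r\mapsto|I_r(E)|^{1/n}$, which makes the lower bound robust for all $r\leq r_0$ rather than only in the limit $r\to 0^+$. The sharp quantitative isoperimetric inequality of \cite{fuscomaggipratelli} then yields $\alpha(E)^2\leq C(n)\,\delta_r(E)$.

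\textbf{Large-$r$ regime.} For $r\geq R_0$, the starting point is the inherent regularity of $I_r(E)=E+B_r$: at every point of $\partial I_r(E)$ there is an interior ball of radius $r$, since each such point lies on $\partial B_r(z)$ for some $z\in\overline{E}$ with $B_r(z)\subseteq I_r(E)$. This uniform rolling-ball property places $I_r(E)$ within Federer's framework of sets of positive reach, making his Steiner-type formula for the parallel volume available. Differentiating the concentration deficit in $r$ and using this expansion gives quantitative control of the \emph{isoperimetric} deficit of $I_r(E)$ itself in terms of $\delta_r(E)$; applying the strong quantitative isoperimetric inequality from \cite{fuscojulin} to the smoother set $I_r(E)$ then yields a ball $B_{1+r}(x)$ with $|I_r(E)\,\Delta\,B_{1+r}(x)|$ controlled by $\delta_r(E)$.

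\textbf{From $I_r(E)$ back to $E$, and the main obstacle.} The crucial remaining step is to transfer the closeness of $I_r(E)$ to a ball back to closeness of $E$ itself to the concentric unit ball, without losing the sharp quadratic exponent. This step is genuinely nonlocal, since distinct sets $E$ can share essentially the same $r$-neighborhood, so the volume constraint $|E|=|B_1|$ must be brought in. The idea is that any mass of $E$ at distance comparable to $\alpha(E)$ from $B_1(x)$ creates a commensurate overshoot of $I_r(E)\setminus B_{1+r}(x)$, and any deficit inside $B_1(x)$ a commensurate shortfall, so the asymmetry of $E$ feeds back additively into the asymmetry of $I_r(E)$. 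The main obstacle is to carry out this transfer with sharp constants and without losing powers of $r$: a direct appeal to general Brunn-Minkowski stability would degrade the exponent below $2$, so the interior-ball structure (via Federer's Steiner formula) and the \emph{strong} form of the quantitative isoperimetric inequality must both be exploited in a refined way. This is precisely where the paper's new geometric ideas specific to the nonlocal setting must enter.
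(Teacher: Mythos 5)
Your proposal has two genuine gaps, one in each regime, and the second is precisely the point that the student's own final paragraph admits is open.

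\textbf{The small-$r$ argument is backwards.} You claim that $\delta_r(E)$ majorizes the isoperimetric deficit $P(E)-P(B_{r_E})$, with the Brunn--Minkowski concavity of $r\mapsto|I_r(E)|^{1/n}$ ``making the lower bound robust for all $r\le r_0$.'' But concavity works \emph{against} you here: it implies that $s\mapsto P(E+B_s)/|E+B_s|^{(n-1)/n}$ is non-increasing, so $\de_{\rm iso}(E+B_s)$ decays in $s$, and $\delta_r(E)$ controls only an average of $\g_E(s)=P(E+B_s)-P(B_{r_E+s})$ over $s\in(0,r)$. That average can be dramatically smaller than $\g_E(0)=P(E)-P(B_{r_E})$: take $E$ to be a ball with a large number of tiny holes, so that $P(E)$ is huge but $\de_{\rm iso}(E+B_s)$ collapses already for $s$ of the order of the hole size. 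Thus $\de_r(E)\not\gtrsim \de_{\rm iso}(E)$ in general, and the appeal to \cite{fuscomaggipratelli} applied to $E$ does not go through. This obstruction is the central difficulty of the whole problem. The paper overcomes it by (a) the $r$-envelope reduction, which erases exactly those sub-scale-$r$ holes without changing $I_r(E)$ (Lemma \ref{lemma EnvelopeProps} and Proposition \ref{prop EnvelopeReduction}), and then (b) applying the \emph{strong} quantitative isoperimetric inequality \eqref{FJIsoperimetric} of Fusco--Julin to $E+B_s$ for a good $s\in(0,r)$ and using the area formula along $T(x)=x+s\,\nu(x)$ (valid on the regularized set) to transfer the oscillation bound on $\nu_{E+B_s}$ back to an oscillation bound on $\nu_E$, hence to $\a(E)$; the ordinary deficit $\a(E+B_s)^2\le C\de_{\rm iso}(E+B_s)$ would not transfer back.

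\textbf{The large-$r$ argument applies Federer's framework to the wrong set, and then stalls.} You invoke positive reach and the Steiner formula for $I_r(E)$ (note also that an interior rolling ball at every boundary point of $I_r(E)$ does \emph{not} by itself give unique nearest-point projections onto $I_r(E)$, which is what positive reach requires), and then conclude with closeness of $I_r(E)$ to a ball $B_{1+r}(x)$. The transfer from ``$I_r(E)$ close to $B_{1+r}(x)$'' back to ``$E$ close to $B_1(x)$'' is exactly what you cannot do with sharp exponent and $r$-independence, and your last paragraph concedes this. The paper's resolution is different and does not require that transfer at all: after reducing to the $r$-convex set $\co_r(E)$ and showing (when $r/r_E$ is large and $\de_r<1$) that $E$ has bounded diameter, contains a definite ball, is a radial Lipschitz graph, and hence has \emph{reach} $1$ (Steps one through five of Theorem \ref{thm rlarge}), one applies Federer's Steiner formula to $E$ \emph{itself}, so that $\g_E(s)$ is a nonnegative polynomial of degree $\le n-1$ on $[0,1]$. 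Lemma \ref{PolyLem} then yields $\int_0^1\g_E\ge c(n)\,\g_E(0)$, and the sharp quantitative isoperimetric inequality is applied directly to $E$. No ``back-transfer'' from $I_r(E)$ is needed.

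In short, the reduction to $r$-convex sets is the missing engine in both regimes. Without it, the small-$r$ bound you want is false as stated, and the large-$r$ argument cannot close the loop.
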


\begin{rem}
  {\rm Notice that
\[
|I_r(E)|=r^n|B_1|+{\rm O}(r^{n-1})\quad\mbox{as $r\to\infty$}\,,\qquad |I_r(E)|=|E|+r\,(P(E)+{\rm o}(1))\quad\mbox{as $r\to 0^+$}
\]
(on sufficiently regular sets), therefore the factor $\max\{r/r_E,r_E/r\}$ in the definition of $\de_r(E)$ is needed to control $\a(E)$ with a constant independent of $r$.}
\end{rem}

\begin{rem}
  {\rm The decay rate of $\a(E)$ in terms of $\de_r(E)$ is optimal. Take for example an ellipse which is a small deformation of $B_1$ given by
$$E_\epsilon := \text{diag}\bigg(1+\epsilon,\, \frac{1}{1+\epsilon}, \,1, \,\ldots,\,1\bigg)B_1.$$
Then $\alpha(E_\epsilon)$ is order $\epsilon$, and one computes that
$$|I_r(E_\epsilon)| = |B_{1+r}|\left(1 + O(\epsilon^2)\,\frac{r}{(1+r)^2}\right),$$
giving $\delta_r(E_\epsilon) = O(\epsilon^2)$.}
\end{rem}

\begin{rem}
  {\rm When $n=1$ \eqref{main inequality} holds with $\a(E)$ in place of $\a(E)^2$, see \cite[Theorem 1.1]{carlenmaggi}
  or \cite[Theorem 1.1]{figallijerison2}.}
\end{rem}

\subsection{Relations with other stability problems}
\label{sect:relation}
Theorem \ref{main} is closely related to the sharp quantitative isoperimetric inequality from \cite{fuscomaggipratelli}
\begin{equation}
  \label{sharp quantitative isoperimetric inequality}
  \de_{{\rm iso}}(G)\ge c(n)\,\a(G)^2\qquad\mbox{if $0<|G|<\infty$}\,,
\end{equation}
where
\[
\de_{{\rm iso}}(G):=\frac{P(G)}{n|B_1|^{1/n}|G|^{(n-1)/n}}-1\,.
\]
This result is used in the proof of \eqref{main inequality},
which in turns implies   \eqref{sharp quantitative isoperimetric inequality} in the limit $r\to 0^+.$
However, the proof of Theorem \ref{main} requires entirely new ideas with respect to the ones developed so far in
the study of \eqref{sharp quantitative isoperimetric inequality}.
Indeed, the original proof of \eqref{sharp quantitative isoperimetric inequality} in \cite{fuscomaggipratelli}
uses symmetrization techniques and an induction on dimension argument, based on slicing, that it is unlikely to yield the sharp exponent
and the independence from $r$ in the final estimate (compare also with \cite{figallijerison1,figallijerison2}).
The mass transportation approach adopted in \cite{FigalliMaggiPratelliINVENTIONES} requires a regularization
step that replaces a generic set with small $\delta_{\rm iso}$ by a nicer set on which the trace-Poincar\'e inequality holds.
While, in our case, it is possible to regularize a set with small $\delta_r$ above a scale $r$ (and indeed this idea is used in the proof of Theorem \ref{main}),
it is unclear to us how to perform a full regularization below the scale $r$.
Finally, the penalization technique introduced in \cite{CicaleseLeonardi} is based on the regularity theory for local almost-minimizers
of the perimeter functional, and there is no analogous theory in this context.

In addition to extending the sharp quantitative isoperimetric inequality, Theorem \ref{main}
marks a definite progress in the important open problem of proving a sharp quantitative stability result for the {\it Brunn-Minkowski
inequality} (see \cite{gardner} for an exhaustive survey)
\begin{equation}\label{BM}
|E + F|^{1/n} \geq |E|^{1/n} + |F|^{1/n}\,,
\end{equation}
where $E+F=\{x+y:x\in E\,,\ y\in F\}$ is the {\it Minkowski sum} of $E,F\subset\R^n$. Let us recall that equality holds in \eqref{BM} if and only if both $E$ and $F$ are convex, and one is a dilated translation of the other. The stability problem for \eqref{BM} can be formulated in terms of the
 {\it Brunn-Minkowski deficit $\delta(E,F)$ of $E$ and $F$}
\[
\delta(E,F) := \max\left\{\frac{|E|}{|F|},\,\frac{|F|}{|E|}\right\}^{1/n}\left(\frac{|E + F|^{1/n}}{|E|^{1/n} + |F|^{1/n}} - 1\right)\,,
\]
(which is invariant by scaling both $E$ and $F$ by a same factor) and of the {\it relative asymmetry of $E$ with respect to $F$}
\[
\alpha(E,F) := \inf\bigg\{\frac{|E\, \Delta \, (x+ \lambda\, \co(F))|}{|E|}:x\in\R^n\bigg\}\qquad \l=\Big(\frac{|E|}{|\co(F)|}\Big)^{1/n}
\]
(which is invariant under possibly different dilations of $E$ and $F$). In the case that both $E$ and $F$ in \eqref{BM} are convex, the optimal stability result
\begin{equation}
\label{stability BM E F sharp}
\alpha(E,F)^2 \leq C(n)\,\delta(E,\,F)
\end{equation}
was established, by two different mass transportation arguments,
in \cite{figallimaggipratelliBrunnMink,FigalliMaggiPratelliINVENTIONES}.
Then, in \cite[Theorem 1.1]{carlenmaggi}, exploiting \eqref{sharp quantitative isoperimetric inequality}, the authors prove that if $E$ is arbitrary and $F$ is convex then
\begin{equation}
  \label{carlenmaggi thm}
  \a(E,F)^4\le C(n)\,\max\bigg\{1,\frac{|F|}{|E|}\bigg\}^{(4n+2)/n}\,\de(E,F)\,.
\end{equation}
In the general case when $E$ and $F$ are arbitrary sets, the best results to date
are contained in \cite{christ,figallijerison1,figallijerison2}, where, roughly speaking, $\alpha(E,\,F)^2$ is replaced by
$\alpha(E,\,F)^{\eta(n)}$ for $\eta(n)$ explicit, and the inequality degenerates when $|F|/|E|$ approaches $0^+$ or $+\infty$. Despite the considerable effort needed to obtain such result -- whose proof is based on measure theory, affine geometry, and (even in the one-dimensional case!) additive combinatorics -- the outcome is still quite far from addressing the natural conjecture that, even on arbitrary sets, the optimal exponent for $\a(E,F)$ should be $2$, with no degenerating pre-factors depending on volume ratios. Since
\[
I_r(E)=E+B_r\qquad\forall r>0\,,
\]
Theorem \ref{main} solves this challenging conjecture when $E$ is arbitrary and $F=B_r$ is a ball. This case is definitely one of the most relevant from the point of view of physical applications.

\subsection{Description of the proof of Theorem \ref{main}} The starting point, as in \cite{carlenmaggi}, is the idea
of using the coarea formula to show that
\begin{equation}
  \label{coarea approx}
  \de_r(E)  \text{ is equal to a certain average of } \g_E(s) :=P(E+B_s)-P(B_{1+s})\,
\end{equation}
over the interval $0 < s < r$. Observe that
\[
\g_E(s)\ge c(n)\,|E+B_s|^{(n-1)/n}\,\de_{{\rm iso}}(E+B_s)\,
\]
thanks to \eqref{euclidean concentration}. When $\de_{{\rm iso}}(E+B_s)$ is comparable to $\de_{{\rm iso}}(E)$ for a substantial set of
radii $s\in(0,r)$, then \eqref{sharp quantitative isoperimetric inequality} would allow us to conclude $\de_r(E)\ge c(n)\,\a(E)^2$.
The difficulty, however, is that $\de_{{\rm iso}}(E+B_s)$ may decrease very rapidly for $s$ close to $0$, for example if $E$ has lots of small ``holes'', so that
$\de_{{\rm iso}}(E + B_s)$ becomes negligible with respect to $\de_{{\rm iso}}(E)$. This problem is addressed in \cite{carlenmaggi} by
an argument which is not precise enough to obtain a quadratic decay rate in the final estimate.

The first key idea that we exploit to overcome this difficulty is introducing a suitable regularization procedure which is compatible with a reduction argument on $\de_r(E)$ and $\a(E)$. More precisely, we introduce the {\it $r$-envelope $\co_r(E)$ of a bounded set $E$}
\[
\co_r(E):=\bigcap\Big\{B_r(x)^c:B_r(x)\subset E^c\Big\},
\]
where, given a set $A$, we use the notation $A^c:=\R^n\setminus A$. From the geometric point of view, it is convenient to
think of $\co_r(E)$ as the set obtained by sliding balls of radius $r$ from the exterior of $E$ until they touch $E$, and filling in the holes of
size $r$ or smaller that remain. This construction regularizes the boundary of $E$ at scale $r$,
while not changing the parallel surface at distance $r$ from $E$ (see Figure \ref{EnvelopePic}).

\begin{figure}
 \centering
    \includegraphics[scale=0.35]{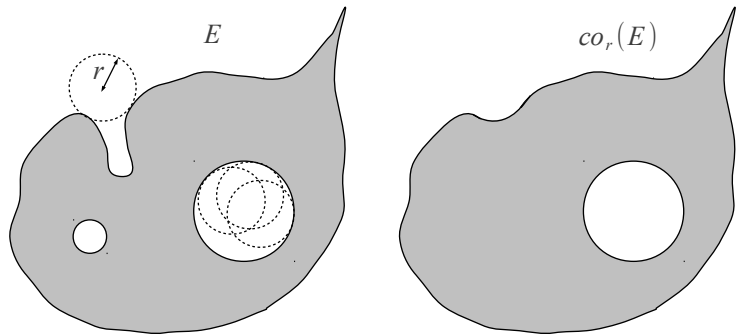}
 \caption{{\small Taking the $r$-envelope $co_r(E)$ regularizes the boundary at scale $r$, without changing the sumset $E + B_r$.}}
\label{EnvelopePic}
\end{figure}

This intuition can be made quantitative, in the sense that $\co_r(E)$ always satisfies an exterior ball condition of radius $r$ (Lemma \ref{lemma EnvelopeProps}) and
has a universal perimeter upper bound in each ball of radius $s$ comparable to $r$ (see Lemma \ref{lemma AreaBound}). Moreover, since $E+B_r=\co_r(E)+B_r$, then we easily see that, up to
excluding some trivial situations, one always has
\[
\a(E)\le C(n)\,\a(\co_r(E))\,,\qquad \de_r(\co_r(E))\le C(n)\,\de_r(E)\,,
\]
see Proposition \ref{prop EnvelopeReduction}. Hence, in the proof of Theorem \ref{main}, one can assume that $E=\co_r(E)$. We call such sets {\it $r$-convex}.

The reduction to $r$-convex sets is particularly effective when $r$ is large with respect to $r_E$.
A first remark is that if $r/r_E$ is large enough and $\de_r(E)\le 1$, then $E$ has bounded diameter (in terms of $n$), after dilating
so that $r_E = 1$. This is in sharp contrast with the situation met in the quantitative study of the isoperimetric inequality, where long spikes of small volume and perimeter are of course compatible with the smallness of the isoperimetric deficit $\de_{{\rm iso}}$. The $r$-convexity can then be used jointly with John's lemma to show that if $E\subset B_{R(n)}$, then $B_{r(n)}\subset E$ for some positive dimensional constant $r(n)$. Starting from these properties we can actually check that $\pa E$ is a radial Lipschitz graph with respect to the origin, and that it has positive reach $1$. This last property means that every point at distance at most $1$ from $E$ has a unique projection on $E$. By a classical result of Federer, see Theorem \ref{SteinerFormula} below, it follows that $P(E+B_s)$ is a polynomial of degree at most $(n-1)$ for $s\in[0,1]$. In particular, $\g_E(s)$ is a non-negative polynomial of degree at most $(n-1)$ for $s\in[0,1]$, so that
\[
|I_1(E)| - |I_1(B_1)| = \int_0^1\g_E(s)\,ds\ge c(n)\,\g_E(0)
\]
by a compactness argument (see Lemma \ref{PolyLem}). Combining this bound with
the fact that $\g_E(0)$ controls $\de_{{\rm iso}}(E)$, \eqref{sharp quantitative isoperimetric inequality}, and the Brunn-Minkowski inequality,
we are able to complete the proof in the regime when $r/r_E$ is large.

We complement the above argument with a different reasoning, which is effective when $r/r_E$ is bounded from above, but actually {degenerates as $r/r_E$ becomes larger}. The key tool here is the strong quantitative version of the isoperimetric inequality from \cite{fuscojulin}, whose statement is now briefly recalled. Let us set
\begin{equation}
  \label{beta definition}
\beta(G)^2:= \inf\bigg\{\frac1{|G|^{(n-1)/n}}\int_{\pa^*G}\Big|\nu_G(y)-\frac{y-x}{|y-x|}\Big|^2\,d\mathcal{H}^{n-1}_y:x\in\R^n\bigg\}\,,
\end{equation}
\begin{equation}
  \label{betastar definition}
  \beta^*(G)^2:=\inf\bigg\{\bigg(\frac{|G\Delta B_{r_G}(x)|}{|G|}\bigg)^2+\frac1{|G|^{(n-1)/n}}\int_{\pa^*G}\Big|\nu_G(y)-\frac{y-x}{|y-x|}\Big|^2\,d\mathcal{H}^{n-1}_y:x\in\R^n\bigg\}\,,
\end{equation}
where $\nu_G$ and $\pa^*G$ denote, respectively, the (measure theoretic) outer unit normal and the reduced boundary of $G$ (if $G$ is an open sets with $C^1$-boundary, $\nu_G$ is standard notion of outer unit normal to $G$, and $\pa^*G$ agrees with the topological boundary of $G$). The quantity $\beta(G)$ measures the $L^2$-oscillation of $\nu_G$ with respect to that of a ball, and in \cite{fuscojulin} it is proved that
\begin{equation}\label{FJIsoperimetric}
 C(n)\,\de_{{\rm iso}}(G)\ge \beta^*(G)^2\,,\qquad 0<|G|<\infty\,.
\end{equation}
Clearly $\a(G)\le\beta^*(G)$, so \eqref{FJIsoperimetric} implies \eqref{sharp quantitative isoperimetric inequality}.
Moreover, as shown in \cite[Proposition 1.2]{fuscojulin}, the quantities $\beta(G)$ and $\beta^*(G)$
are actually equivalent:
\begin{equation}\label{OscVsAsymmetry}
\beta(G)\leq \beta^*(G)\le C(n)\,\beta(G)\,.
\end{equation}
Our argument (based on \eqref{FJIsoperimetric}) is then the following. By combining a precise form of \eqref{coarea approx} with
\eqref{FJIsoperimetric} we deduce that, for some $s<r$, we have
\[
C(n,r/r_E)\,\de_r(E)\ge\beta(E+B_s)^2\,(|E + B_s|/|E|)^{(n-1)/n}\,,
\]
where $C(n,r/r_E)$ is a constant that degenerates for $r/r_E$ large.
We now exploit the $r$-convexity property to apply the area formula between the surfaces $\pa E$ and $\pa(E+B_s)$ and compare $\beta(E+B_s)^2(|E+B_s|/|E|)^{(n-1)/n}$ with $\beta(E)^2$,
and thus with $\a(E)^2$, which concludes the proof of Theorem \ref{main}.\\

\subsection{Organization of the paper} In section \ref{section reduction} we introduce the notion of $r$-envelope, show some key properties of $r$-convex sets, and reduce the proof of Theorem \ref{main} to this last class of sets. In section \ref{section small} we present the argument based on the strong form of the quantitative isoperimetric inequality, while in section \ref{section large} we address the regime when $r/r_E$ is large.

\section{Reduction to $r$-convex sets}\label{section reduction} In this section we introduce a geometric regularization procedure for
subsets of $\R^n$ which can be effectively used to reduce the class of sets considered in Theorem \ref{main}.
We recall the notation $B_r(x)=\{y\in\R^n:|y-x|<r\}$ for the ball of center
$x\in\R^n$ and radius $r>0$ (so that $B_r=B_r(0)$) and $E^c=\R^n\setminus E$ for the complement of $E\subset\R^n$.

We begin
with the following definitions.
\begin{defn}\label{ExtBallCondition}
 Let $E \subset \mathbb{R}^n$ and $r>0$.

 \noindent (i) $E$ satisfies the {\it exterior ball condition of radius $r$} if, for all $y \in \partial E$, there exists
 some ball $B_r(x) \subset E^c$ such that $y \in \partial B_r(x)$.

 \noindent (ii) the {\it $r$-envelope of $E$} is the set $\co_r(E)$ defined by
 \[
 \co_r(E) = \Big(\bigcup\big\{B_r(x):B_r(x)\subset E^c\big\}\Big)^c=\bigcap\Big\{B_r(x)^c:E\subset B_r(x)^c\Big\}\,.
 \]
 We say that $E$ is {\it $r$-convex} if $E=\co_r(E)$.
\end{defn}


Notice that $E$ is convex if and only if it satisfies the exterior ball condition of radius $r$ for every $r>0$. Similarly
$E$ is convex if and only if $E=\co_r(E)$ for every $r>0$.

In Lemma \ref{lemma EnvelopeProps} we collect some useful properties of $\co_r(E)$, which are then exploited in Proposition
\ref{prop EnvelopeReduction} to reduce the proof of Theorem \ref{main} to the case of $r$-convex sets. Then, in Lemma
\ref{lemma AreaBound} we prove a uniform upper perimeter estimate for $r$-envelopes which will play an important role in
the proof of Theorem \ref{main}.

\begin{lem}\label{lemma EnvelopeProps}
 If $E\subset\R^n$ is bounded and $r>0$, then:
 \begin{enumerate}
    \item[(i)] $E\subset\co_r(E)$;
    \item[(ii)] $\co_r(E)$ is compact;
    \item[(iii)] $B_r(x)\subset E^c$ if and only if $B_r(x)\subset\co_r(E)^c$;
    \item[(iv)] $E+B_r=\co_r(E)+B_r$;
    \item[(v)] $\co_r(E)$ satisfies the exterior ball condition of radius $r$.
  \end{enumerate}
\end{lem}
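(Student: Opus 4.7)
The plan is to verify the five properties in sequence: (i)--(iv) follow quickly from the two equivalent descriptions of $\co_r(E)$ together with some bookkeeping, while (v) is the only part requiring a limiting argument.

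For (i) I would read off $E \subset \co_r(E)$ from the fact that $\co_r(E)^c$ is a union of open balls, each of which is contained in $E^c$ by construction, so $\co_r(E)^c \subset E^c$. For (ii), the same union representation makes $\co_r(E)^c$ open, hence $\co_r(E)$ closed; boundedness follows by observing that if $E \subset B_R$ and $|x| > R+r$, then every $y \in B_r(x)$ satisfies $|y| > R$, so $B_r(x) \subset E^c$ and $x \in \co_r(E)^c$, giving $\co_r(E) \subset \overline{B_{R+r}}$. For (iii) the forward direction is the definition, while the reverse direction uses the inclusion $\co_r(E)^c \subset E^c$ established in (i). For (iv) I would rewrite both sides via the general identity $y \in A + B_r \iff B_r(y) \cap A \neq \emptyset \iff B_r(y) \not\subset A^c$; by (iii) the conditions $B_r(y) \subset E^c$ and $B_r(y) \subset \co_r(E)^c$ are equivalent, so $E + B_r$ and $\co_r(E) + B_r$ coincide.

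The substantive step is (v). Given $y \in \partial \co_r(E)$, closedness from (ii) places $y \in \co_r(E)$, while the boundary condition produces a sequence $y_k \to y$ with $y_k \in \co_r(E)^c$. Each $y_k$ lies in some open ball $B_r(x_k) \subset E^c$ witnessing its membership in the complement of the envelope. Since $|x_k - y_k| < r$ and $y_k \to y$, the centres $(x_k)$ are bounded, so after passing to a subsequence $x_k \to x$ with $|x - y| \leq r$. I would then claim $B_r(x) \subset \co_r(E)^c$: for any $z$ with $|z - x| < r$, the strict inequality $|z - x_k| < r$ holds for $k$ large, putting $z \in B_r(x_k) \subset E^c$, and then $z \in \co_r(E)^c$ by (iii). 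Finally, $|x - y| = r$ must hold, since $|x-y| < r$ would place $y$ inside the open set $B_r(x) \subset \co_r(E)^c$, contradicting $y \in \co_r(E)$.

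The only delicate point is the limiting step in (v): one must check that the limit ball $B_r(x)$ is genuinely contained in $\co_r(E)^c$, which is where the openness of the approximating balls is essential---strict inequalities $|z - x_k| < r$ survive passage to the limit for each interior point $z$ of $B_r(x)$, even as $|x - y|$ itself saturates at the critical value $r$. Beyond this, the argument reduces to translating freely between the two characterizations of $\co_r(E)$ via (iii).
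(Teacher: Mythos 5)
Your proof is correct and follows essentially the same route as the paper: parts (i)--(iv) come directly from the two equivalent descriptions of $\co_r(E)$ together with (iii), and part (v) uses the same compactness argument (bounded centres, subsequential limit, then check the limit ball is still in the complement). Your explicit verification that $B_r(x)\subset\co_r(E)^c$ passes to the limit is a worthwhile detail that the paper leaves implicit.
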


\begin{proof}
 The first three conclusions are immediate.

 If $x\in \co_r(E)+B_r$, then there exists $y\in \co_r(E)$
 such that $y\in B_r(x)$; in particular $B_r(x)\cap \co_r(E)\ne\emptyset$, hence $B_r(x)\cap E\ne\emptyset$ by (iii), and thus $x\in E+B_r$, which proves (iv).

 Finally, given $x \in \partial( \co_r(E))$, let $x_j\to x$ with $x_j\in\co_r(E)^c$, so that there exists $y_j$ with $E\subset B_r(y_j)^c$ and $x_j\not\in B_r(y_j)^c$. Since $|x_j-y_j|< r$ and $x_j\to x$,
 up to a subsequence we can find $y \in \R^n$ such that $y_j\to y$ with $|x-y|\le r$. On the other hand, since $E\subset B_r(y_j)^c$, it follows by (iii) that $B_r(y_j)\subset\co_r(E)^c$, thus $B_r(y)\subset\co_r(E)^c$. As $x\in\co_r(E)$, this implies that $|x-y|\ge r$. In conclusion $|x-y|=r$ and $B_r(y)\subset\co_r(E)^c$, so (v) is proved.
\end{proof}

We now address the reduction to $r$-convex sets. Let us recall the following elementary property of the Fraenkel asymmetry:
\begin{equation}
  \label{asymmetry lipschitz}
  \big||E|\,\a(E)-|F|\,\a(F)\big|\le |E\Delta F|\,,\qquad 0<|E|\,|F|<\infty\,,
\end{equation}
see e.g. \cite[Lemma 2.2]{carlenmaggi}.

\begin{prop}\label{prop EnvelopeReduction}
Let $E$ be a bounded measurable set and $r>0$. There exists a dimensional constant $C(n)$ such that:
 \begin{itemize}
 \item[(a)] either $C(n)\,\de_r(E)\ge \a(E)^2$;
 \item[(b)] or
 \begin{equation}
   \label{reduction}
    \a(E)\le C(n)\,\a(\co_r(E))\,,\qquad \de_r(\co_r(E))\le C(n)\,\de_r(E)\,.
 \end{equation}
 \end{itemize}
\end{prop}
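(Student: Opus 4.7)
By Lemma \ref{lemma EnvelopeProps}(iv), $I_r(E)=I_r(\co_r(E))$, so the only difference between $\de_r(E)$ and $\de_r(\co_r(E))$ is encoded in the reference balls $B_{r_E}$ versus $B_{r_{\co_r(E)}}$. I introduce the single parameter
\[
\eta:=\frac{|\co_r(E)|}{|E|}-1\ge 0
\]
(nonnegative by Lemma \ref{lemma EnvelopeProps}(i)) and denote $\rho_1:=r_E$, $\rho_2:=r_{\co_r(E)}=(1+\eta)^{1/n}\rho_1$. The argument splits according to whether $\eta\ge \a(E)/4$ or $\eta<\a(E)/4$.

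I would first establish the auxiliary lower bound $\de_r(E)\ge c(n)\min\{\eta,1\}$. Applying \eqref{euclidean concentration} to $\co_r(E)$ gives $|I_r(E)|=|I_r(\co_r(E))|\ge |I_r(B_{\rho_2})|$. The one-variable inequality $(\rho_2+r)^n-(\rho_1+r)^n\ge n(\rho_1+r)^{n-1}(\rho_2-\rho_1)$, together with the elementary estimate $(1+\eta)^{1/n}-1\ge c_n\min\{\eta,1\}$, produces
\[
\frac{|I_r(B_{\rho_2})|-|I_r(B_{\rho_1})|}{|I_r(B_{\rho_1})|}\ge \frac{c_n\,\rho_1\,\min\{\eta,1\}}{\rho_1+r}.
\]
A brief case split on whether $r\le \rho_1$ or $r\ge\rho_1$ shows that multiplying by $\max\{r/\rho_1,\rho_1/r\}$ absorbs the factor $\rho_1/(\rho_1+r)$ up to a dimensional constant, yielding $\de_r(E)\ge c(n)\min\{\eta,1\}$. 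Consequently, if $\eta\ge \a(E)/4$, then $\de_r(E)\ge c(n)\min\{\a(E)/4,1\}\ge c(n)\,\a(E)/4\ge c(n)\,\a(E)^2/8$ (using $\a(E)\le 2$), giving (a).

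In the complementary regime $\eta<\a(E)/4$, I would verify both inequalities of (b). For the asymmetry, \eqref{asymmetry lipschitz} applied with $F=\co_r(E)$ and $|\co_r(E)\Delta E|=\eta|E|$ yields $\a(E)\le(1+\eta)\,\a(\co_r(E))+\eta\le(1+\a(E)/4)\,\a(\co_r(E))+\a(E)/4$, which rearranges (using $\a(E)\le 2$) to $\a(E)\le 2\,\a(\co_r(E))$. For the deficit, the identity $|I_r(E)|=|I_r(\co_r(E))|$ together with $|I_r(B_{\rho_2})|\ge|I_r(B_{\rho_1})|$ (from $\rho_2\ge\rho_1$) gives
\[
\frac{|I_r(\co_r(E))|}{|I_r(B_{\rho_2})|}-1\le\frac{|I_r(E)|}{|I_r(B_{\rho_1})|}-1,
\]
while a short case analysis on the position of $r$ relative to $\rho_1,\rho_2$ yields $\max\{r/\rho_2,\rho_2/r\}\le(\rho_2/\rho_1)\,\max\{r/\rho_1,\rho_1/r\}\le 2^{1/n}\,\max\{r/\rho_1,\rho_1/r\}$ (since $\eta<1$). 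Multiplying these two bounds gives $\de_r(\co_r(E))\le C(n)\,\de_r(E)$.

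The main obstacle is the careful bookkeeping of the prefactor $\max\{r/r_E,r_E/r\}$, whose behavior changes qualitatively across $r=r_E$ (and across $r=r_{\co_r(E)}$); controlling it uniformly in both regimes is what makes the constants in (a) and (b) independent of $r$.
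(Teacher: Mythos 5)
Your proof is correct and takes essentially the same route as the paper: both arguments split according to whether the extra volume $|\co_r(E)\setminus E|$ is large or small relative to $\a(E)$, control the large-volume branch by applying the Brunn--Minkowski/concentration inequality to $\co_r(E)+B_r$ to bound $\de_r(E)$ from below, and in the small-volume branch verify both inequalities of (b) using \eqref{asymmetry lipschitz} plus the identity $E+B_r=\co_r(E)+B_r$. The only cosmetic differences are that the paper leaves the threshold $b(n)$ implicit rather than fixing $\a(E)/4$, and phrases the part-(a) lower bound via a multiplicative split of the ratio $|\co_r(E)+B_r|^{1/n}/((1+r)|B_1|^{1/n})$ instead of your one-variable inequality on $(\rho_2+r)^n-(\rho_1+r)^n$.
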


\begin{proof} Without loss of generality we can assume that $|E|=|B_1|$, so that $r_E=1$.
 For a suitable constant $b(n)>0$, we
split the argument depending on whether $|\co_r(E)\setminus E|\ge b(n)\,\a(E)$ or not.

If $|\co_r(E)\setminus E|\ge b(n)\,\a(E)$ then, since $\alpha(E) < 2$ and $|E|=|B_1|$, we have
 \[
 |\co_r(E)|^{1/n}\geq \bigl(|B_1|+b(n)\,\a(E)\bigr)^{1/n}\ge|B_1|^{1/n} + c(n)b(n)\alpha(E)\,.
 \]
 Thus, applying Lemma \ref{lemma EnvelopeProps}-(iv) and the Brunn-Minkowski inequality
 we get
 \begin{align*}
 \delta_r(E) &\ge \max\bigg\{r,\frac1r\bigg\}\left(\frac{|E + B_r|^{1/n}}{(1+r)|B_1|^{1/n}}-1\right) \\
 &= \max\bigg\{r,\frac1r\bigg\}\left(\frac{|\co_r(E) + B_r|^{1/n}}{|\co_r(E)|^{1/n} + r|B_1|^{1/n}}\frac{|\co_r(E)|^{1/n} + r|B_1|^{1/n}}{(1+r)|B_1|^{1/n}} - 1\right) \\
 &\geq c(n)\max\bigg\{r,\frac1r\bigg\}\,\frac{b(n)\alpha(E)}{1+r}
 \geq c(n)\,\alpha(E)\ge c(n)\,\a(E)^2\,,
 \end{align*}
 where in the last inequality we have used again $\a(E)<2$.
This proves the validity of (a).

 We are thus left to show that if $|\co_r(E)\setminus E|< b(n)\,\a(E)$, then \eqref{reduction} holds. By exploiting \eqref{asymmetry lipschitz}
 with $F=\co_r(E)$ and assuming $b(n)$ small enough, then $|\co_r(E)\setminus E|< b(n)\,\a(E)$ gives $\a(\co_r(E))\ge c(n)\a(E)$. At the same
 time, using again that $|\co_r(E)\setminus E|< b(n)\,\a(E) \leq 2\,b(n)$ we find
 that the volumes of $|E|$ and $|\co_r(E)|$ are comparable,
 therefore
 \[
 \max\bigg\{\frac{|\co_r(E)|}{|B_r|},\frac{|B_r|}{|\co_r(E)|}\bigg\}\le C(n)
  \max\bigg\{\frac{|E|}{|B_r|},\frac{|B_r|}{|E|}\bigg\}\,.
 \]
In addition, it follows by Lemma \ref{lemma EnvelopeProps}-(iv) and the trivial inequality $|E|\le|\co_r(E)|$ that
 \[
 \frac{|\co_r(E)+B_r|^{1/n}}{|\co_r(E)|^{1/n}+|B_r|^{1/n}}\le\frac{|E+B_r|^{1/n}}{|E|^{1/n}+|B_r|^{1/n}}\,.
 \]
This shows that $\de_r(\co_r(E))\le C(n)\,\de_r(E)$, and \eqref{reduction} is proved.
\end{proof}

Intuitively, $r$-convex sets are nice up to scale $r$. In this direction, the following lemma provides a uniform perimeter bound. Here and in the sequel we use the notation $P(F;B_s(x))$ to denote the perimeter of a set $F$ inside the ball $B_s(x)$. In particular, if $F$ is a smooth set, $P(F;B_s(x))=\H^{n-1}(\pa F\cap B_s(x))$.

\begin{lem}\label{lemma AreaBound}
If $E$ is a bounded set and $r>0$, then $\co_r(E)$ has finite perimeter and
\begin{equation}
  \label{upper perimeter estimate}
  P(\co_r(E);B_s(x))\le \frac{C(n)}{1-(s/r)}\,s^{n-1}\,,\qquad\forall \,s<r\,, x\in\co_r(E)\,.
\end{equation}
\end{lem}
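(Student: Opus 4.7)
Since $\co_r(E)$ is bounded (Lemma \ref{lemma EnvelopeProps}(ii)), the finite-perimeter claim follows from the local estimate via a covering argument, so I focus on proving \eqref{upper perimeter estimate}.

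The main geometric tool is the exterior ball condition (Lemma \ref{lemma EnvelopeProps}(v)). At a.e.\ $z \in \pa^* \co_r(E)$, the measure-theoretic outer unit normal $\nu(z)$ satisfies $\nu(z) = (y - z)/r$ for some $y$ with $B_r(y) \subset \co_r(E)^c$. For $x \in \co_r(E)$ one has $|x - y| \geq r$, and expanding this with $y = z + r\nu(z)$ gives the key one-sided inequality
\[
\langle z - x, \nu(z)\rangle \geq -\frac{|z - x|^2}{2r},
\]
which geometrically encodes that the principal curvatures of $\pa \co_r(E)$ (with respect to the outer normal) are bounded below by $-1/r$.

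I apply a Steiner tube argument. For $\rho \in (0, r - s)$ to be chosen, set
\[
T_\rho := \{z + t \nu(z) : z \in \pa^* \co_r(E) \cap B_s(x),\ t \in (0, \rho)\}.
\]
Since each point $z + t \nu(z)$ lies inside the exterior ball $B_r(y)$ at $z$, and since $|z + t\nu(z) - x| < s + \rho$, one has $T_\rho \subset \co_r(E)^c \cap B_{s+\rho}(x)$, hence $|T_\rho| \leq |B_1|(s + \rho)^n$. On the other hand, the one-sided curvature bound implies that the Jacobian of the parallel map $z \mapsto z + t\nu(z)$ is at least $(1 - t/r)^{n-1}$; after justifying injectivity (or controlled multiplicity) of this map, integration in $t$ yields
\[
|T_\rho| \geq P(\co_r(E); B_s(x)) \cdot \int_0^\rho (1 - t/r)^{n-1}\,dt = \frac{r}{n}\bigl(1 - (1 - \rho/r)^n\bigr)\,P(\co_r(E); B_s(x)).
\]
Combining the two bounds and optimizing $\rho$ (choosing $\rho \sim s$ when $s \leq r/2$, and $\rho = r - s$ otherwise) gives $P(\co_r(E); B_s(x)) \leq C(n) s^{n-1}$ in the former regime and $\leq C(n) r^{n-1}/(1 - s/r) \leq C(n) s^{n-1}/(1 - s/r)$ in the latter, yielding \eqref{upper perimeter estimate}.

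The main technical obstacle will be the rigorous justification of the Jacobian lower bound on $|T_\rho|$ in the possibly non-smooth setting: the parallel map can fail to be injective when two facing pieces of $\pa \co_r(E)$ lie within distance $2\rho$ of each other. I plan to address this by approximating $\co_r(E)$ from within by smoother $r$-convex sets (retaining the exterior ball condition of radius $r$), applying Federer's Steiner-type formula for sets of one-sided positive reach to these approximations, and then passing to the limit via the lower semicontinuity of perimeter.
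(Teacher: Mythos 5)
Your geometric idea is the right one and, properly implemented, becomes exactly the paper's argument: the exterior ball condition forces the outward normal flow from $\partial \co_r(E)$ to penetrate radially into the exterior tangent balls, so the resulting tube is small relative to its ``base area,'' which gives the density bound. The paper executes this by writing $\co_r(E)^c=\bigcup_i B_r(x_i)$, truncating to a finite family $F_N=\ov{B_R}\cap\bigcap_{i\le N}B_r(x_i)^c$, splitting $\pa F_N$ into the disjoint spherical caps $\S_i\subset\pa B_r(x_i)$, and applying the divergence theorem to $v_i(y)=(y-x_i)/|y-x_i|$ over the cone $K_i\cap B_s(x)$. On each $\S_i$ your ``tube'' is precisely the inward cone-shell toward $x_i$, the Jacobian is exactly $(1-t/r)^{n-1}$, and disjointness is the statement $|K_i\cap K_j|=0$; so the two arguments coincide once you pass to $F_N$. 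The estimate is then transferred to $\co_r(E)$ by lower semicontinuity of perimeter, just as you intend.

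The gap is in the mechanism you propose for justifying the Jacobian step. Replacing $\co_r(E)$ by ``smoother $r$-convex sets'' and invoking Federer's Steiner formula does not work, because $r$-convex sets need not have positive reach; the paper itself points this out right after the proof of Theorem \ref{thm rlarge} (e.g.\ $B_2$ minus two balls of radius $2r$ with centers at distance $4r-\de$ is its own $r$-envelope, yet its reach tends to $0$ as $\de\to 0$). The same example is a finite intersection of ball complements, so even the natural piecewise-smooth approximants fail to have positive reach, and there is no Federer-type Steiner polynomial available under ``one-sided positive reach.'' Moreover, as written, the area-formula step needs more than ``controlled'' multiplicity: the area formula gives $\int_{T_\rho}N\,dw=\int J\Phi$, so your lower bound $|T_\rho|\ge P(\co_r(E);B_s(x))\int_0^\rho(1-t/r)^{n-1}\,dt$ requires $N\equiv 1$ a.e.\ (or an explicit upper bound on $N$, which would degrade the constant); and on $\pa^*\co_r(E)$ the normal field is in general not Lipschitz, so the area formula itself is not directly applicable. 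These difficulties disappear once you use the explicit decomposition of $\co_r(E)^c$ into $r$-balls and work on $F_N$, rather than trying to smooth the set. With that substitution your outline becomes a correct proof, essentially identical to the paper's.
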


\begin{proof}
  The open set $\co_r(E)^c$ is the union of the open balls $B_r(x)$ such that $B_r(x)\subset E^c$. Thus there exist an at most countable set $I$ such that
  \[
  \co_r(E)^c=\bigcup_{i\in I}\,B_r(x_i)\qquad B_r(x_i)\subset E^c\qquad x_j\ne x_i\ \ \mbox{if $i\ne j$}\,.
  \]
  Let $I_N$ be an increasing sequence of finite subsets of $I$ with $\#I_N=N$. We fix $N$ and for $i\in I_N$ we define
  the spherical region
  \[
  \S_i:=\pa B_r(x_i)\cap\bigcap_{j\in I_N,\,j\ne i } B_r(x_j)^c
  \]
  and the intersection of $B_r(x_i)$ with the cone of vertex $x_i$ over $\S_i$,
  \[
  K_i:=B_r(x_i)\cap \big\{x_i+t\,(x-x_i):x\in\S_i\,,0\le t<1\big\}\,
  \]
 (see Figure \ref{AreaBoundPic}).

\begin{figure}
 \centering
    \includegraphics[scale=0.35]{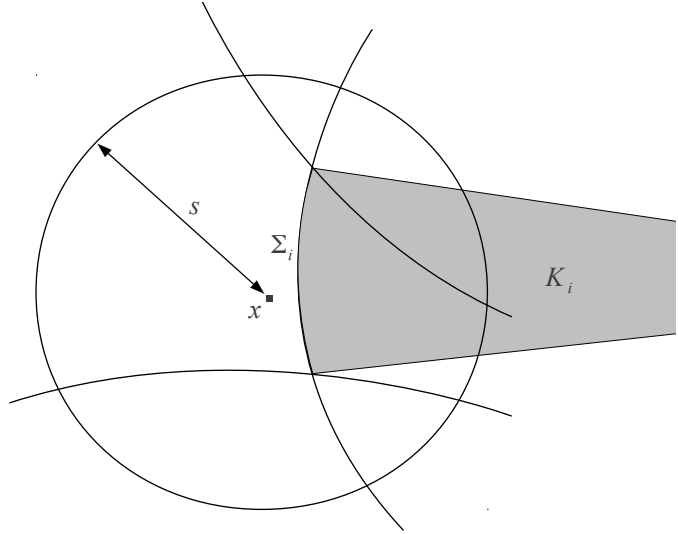}
 \caption{{\small The area of $\Sigma_i$ in $B_s(x)$ is controlled by the area of $\partial B_s(x)$ in $K_i$ and the volume of $B_s(x)$ in $K_i$.}}
\label{AreaBoundPic}
\end{figure}

  Since $x_i\ne x_j$ for each $i\ne j$, we see that
  \begin{equation}
    \label{i dijsoint}
      \H^{n-1}(\S_i\cap\S_j)=|K_i\cap K_j|=0\qquad\forall \,i\ne j\,.
  \end{equation}
  Let $x\in\co_r(E)$ and $s<r$, so that $|y-x_i|> |x-x_i|-s\ge r-s$ for each $i$ and $y\in B_s(x)$.
  By applying the divergence theorem to the vector field $v_i(y)=(y-x_i)/|y-x_i|$ over $K_i\cap B_s(x)$, we find
  \[
  \int_{K_i\cap B_s(x)}\frac{n-1}{|y-x_i|}\,dy=\H^{n-1}(\S_i\cap B_s(x))
  -\int_{K_i\cap\pa B_s(x)}\frac{y-x_i}{|y-x_i|}\cdot\frac{y-x}{|y-x|}\,d\H^{n-1}(y)\,,
  \]
  and thus
  \[
  \H^{n-1}(\S_i\cap B_s(x))\le \H^{n-1}(K_i\cap\pa B_s(x))+(n-1)\,\frac{|K_i\cap B_s(x)|}{r-s}\,.
  \]
  Hence, it follows by \eqref{i dijsoint} that
  \begin{equation}
    \label{from}
  \H^{n-1}\Big(B_s(x)\cap\bigcup_{i\in I_N}\S_i\Big)\le \H^{n-1}(\pa B_s(x))+(n-1)\,|B_1|\,\frac{s^n}{r-s}\le \frac{C(n)}{1-(s/r)}\,s^{n-1}\,.
  \end{equation}
  To conclude the proof,  fix $R>1$ such that $\co_r(E)+B_r \subset B_R$, and set
  \begin{equation}
    \label{FN}
      F_N:=\ov{B_R}\cap \bigcap_{i\in I_N}B_r(x_i)^c\,.
  \end{equation}
  Notice that $F_N$ is a decreasing family of compact sets of finite perimeter, with
  \[
  \co_r(E)=\bigcap_{N\in\N}F_N\qquad \lim_{N\to\infty}|\co_r(E)\Delta F_N|=0\qquad B_R\cap\pa^*F_N=B_R\cap\bigcup_{i\in I_N}\S_i\,,
  \]
  (the last identity modulo $\H^{n-1}$-negligible sets). Since $B_s(x)\cc B_R$ we conclude from \eqref{from} that
  \[
  P(F_N;B_s(x))\le\frac{C(n)}{1-(s/r)}\,s^{n-1}\,,
  \]
  and then deduce \eqref{upper perimeter estimate} by lower semicontinuity of the distributional perimeter.
\end{proof}

\begin{rem}\label{remark FN}
  {\rm Let $E$ be such that $E=\co_r(E)$ for some $r>0$, and let $F_N$ be the compact sets defined in the previous proof.
  Noticing that
  \[
  E+\ov{B_s}=\bigcap_{N\in\N}\,\bigl(F_N+\ov{B_s}\bigr)\,, \qquad \forall\,s>0\,,
  \]
  and since $|E+\ov{B_s}|=|E+B_s|$ we conclude that, for every $s>0$, $|F_N+B_s|\to |E+B_s|$ as $N\to\infty$. In particular, taking $s=r$ and recalling
  \eqref{asymmetry lipschitz} and $|F_N\Delta E|\to 0$ we conclude that
  \begin{equation}
    \label{reduction to FN}
      \a(E)=\lim_{N\to\infty}\a(F_N)\qquad \de_r(E)=\lim_{N\to\infty}\de_r(F_N)\,.
  \end{equation}
  }
\end{rem}

\section{A stability estimate which degenerates for $r$ large}\label{section small} In this section we
present an argument based on the strong quantitative isoperimetric inequality \eqref{FJIsoperimetric}
which leads to a stability estimate which degenerates for $r$ large. We recall that
\[
r_E=\bigg(\frac{|E|}{|B_1|}\bigg)^{1/n}\,.
\]

\begin{thm}\label{thm rconvex 1}
  If $E$ is a bounded $r$-convex set, then
  \[
  C(n)\,\max\bigg\{1,\bigg(\frac{r}{r_E}\bigg)^{n-1}\bigg\}\,\de_r(E)\ge\a(E)^2\,.
  \]
\end{thm}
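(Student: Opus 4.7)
The plan is to combine the coarea formula, the strong quantitative isoperimetric inequality \eqref{FJIsoperimetric}, and an area-formula comparison that leverages $r$-convexity. After rescaling so that $r_E=1$, I first record the identity
\[
|I_r(E)|-|I_r(B_1)|=\int_0^r\gamma_E(s)\,ds,\qquad \gamma_E(s):=P(E+B_s)-P(B_{1+s})\ge 0,
\]
which follows from the coarea formula once one knows that $E+B_s$ has finite perimeter for a.e.\ $s\in(0,r)$; this is ensured by the $r$-convexity of $E$, via Lemma \ref{lemma AreaBound} (applied to the ``parallel'' level sets, or by the $F_N$ approximation in Remark \ref{remark FN}). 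The identity converts $\delta_r(E)$ into an average of $\gamma_E$.

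Next, I apply the sharp isoperimetric inequality \eqref{FJIsoperimetric} to $E+B_s$ and use the fact that $|E+B_s|^{(n-1)/n}\ge(1+s)^{n-1}|B_1|^{(n-1)/n}$ by Brunn–Minkowski to deduce
\[
\gamma_E(s)\ge c(n)\,|E+B_s|^{(n-1)/n}\,\beta(E+B_s)^2,
\]
so that after integration,
\[
|I_r(E)|-|I_r(B_1)|\ge c(n)\int_0^r |E+B_s|^{(n-1)/n}\,\beta(E+B_s)^2\,ds.
\]
The key step — and the part I expect to be the main obstacle — is to convert the last integrand, which involves the geometry of the enlarged set $E+B_s$, back into information on $\partial E$ itself. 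Here I use the $r$-convexity of $E$: by Lemma \ref{lemma EnvelopeProps}(v), $E$ enjoys an exterior ball condition of radius $r$, so the normal map $\pi_s(y):=y+s\,\nu_E(y)$ is a bijection of $\partial E$ onto $\partial(E+B_s)$ for $s<r$, preserves unit normals ($\nu_{E+B_s}(\pi_s(y))=\nu_E(y)$), and has a Jacobian $J_s(y)=\prod_i(1+s\kappa_i(y))$ which is bounded below by $(1-s/r)^{n-1}$ (since the principal curvatures with respect to $\nu_E$ satisfy $\kappa_i\ge-1/r$).

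The area formula, applied with a fixed center $x$ which is essentially the centroid of $E$ (so that neither $|y-x|$ nor $|\pi_s(y)-x|$ degenerate), then produces
\[
\beta(E+B_s)^2\,|E+B_s|^{(n-1)/n}\ge c(n)\,(1-s/r)^{n-1}\,\beta(E)^2\,|E|^{(n-1)/n},
\]
provided one also verifies that $(y-x)/|y-x|$ and $(\pi_s(y)-x)/|\pi_s(y)-x|$ differ by a controllable amount (this is where the boundedness of $E$, a consequence of $r$-convexity plus normalization, plays a role, and is the technical heart of the argument). Restricting the previous integration to $s\in(0,r/2)$, where $(1-s/r)^{n-1}\ge 2^{1-n}$, gives
\[
|I_r(E)|-|I_r(B_1)|\ge c(n)\,r\,\beta(E)^2.
\]

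Finally, inserting this estimate into the definition of $\delta_r(E)$ yields
\[
\delta_r(E)\ge c(n)\,\beta(E)^2\,\frac{r\max\{r,1/r\}}{(1+r)^n},
\]
which specializes to $\delta_r(E)\ge c(n)\beta(E)^2$ for $r\le1$, and to $r^{n-1}\delta_r(E)\ge c(n)\beta(E)^2$ for $r\ge1$. Applying \eqref{OscVsAsymmetry} to pass from $\beta(E)$ to $\alpha(E)\le\beta^*(E)\le C(n)\beta(E)$ completes the proof. For non-smooth $r$-convex sets, a short approximation via the $F_N$ from Remark \ref{remark FN} (which are smooth outside a negligible set and converge to $E$ in the sense of \eqref{reduction to FN}) legitimizes the use of the area formula and the curvature identities above.
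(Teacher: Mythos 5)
Your proposal reproduces the high-level strategy of the paper's proof — coarea formula to express $\delta_r$ as an average of $\gamma_E$, the Fusco-Julin inequality \eqref{FJIsoperimetric} to lower-bound $\gamma_E(s)$ by the oscillation $\beta(E+B_s)^2$, and the normal map $y\mapsto y+s\nu_E(y)$ (exploiting $r$-convexity via Lemma \ref{lemma EnvelopeProps}(v)) to transfer that oscillation back to $\partial E$. But the step you flag as the "technical heart" is precisely where a genuine gap remains, and it cannot be closed the way you suggest.

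The first problem is the centering. You write that the area-formula comparison is "applied with a fixed center $x$ which is essentially the centroid of $E$," but $\beta(E+B_s)^2$ is an \emph{infimum} over centers: from $\gamma_E(s)\ge c\,|E+B_s|^{(n-1)/n}\beta(E+B_s)^2$ you learn nothing about the oscillation integral centered at the centroid, only about the oscillation centered at the (unknown) near-optimal point $x^*$ for $E+B_s$. To conclude you must therefore prove the comparison
\[
\int_{\partial^*(E+B_s)}\Big|\nu-\tfrac{y-x^*}{|y-x^*|}\Big|^2\,d\H^{n-1}\ \ge\ c(n)\int_{\partial^*E}\Big|\nu-\tfrac{y-x^*}{|y-x^*|}\Big|^2\,d\H^{n-1}
\]
for this $x^*$, and there is no a priori reason for $x^*$ to be far from $\partial E$. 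When it is close, the key elementary inequality comparing $\tfrac{y-x^*}{|y-x^*|}$ with $\tfrac{T(y)-x^*}{|T(y)-x^*|}$ fails on the exceptional set $\{|y-x^*|\lesssim s\}$. The paper devotes all of "case two" to this situation: it uses the perimeter bound of Lemma \ref{lemma AreaBound} to control $\H^{n-1}(\partial^*F_N\cap B_{3s}(x^*))\le C(n)s^{n-1}$, and then proves the separate lower bound \eqref{claim}, namely that if the center is that close to $\partial E$ then $\int_{\partial^*G_s}|\nu_{G_s}-y/|y||^2\ge c(n)s^{n-1}$, so that the exceptional contribution is absorbed. Your proposal omits this entirely. (Note also that your claim that boundedness of $E$ "is a consequence of $r$-convexity plus normalization" is not correct with constants depending only on $n$; $r$-convex sets of volume $|B_1|$ can have arbitrarily large diameter when $r$ is small, and uniform boundedness is only established in Theorem \ref{thm rlarge} under the hypothesis $r\ge C(n)$.)

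The second problem is the range of $s$. You integrate the comparison over $s\in(0,r/2)$, but the elementary radial-vector inequality used in the paper (stated for $|z|>2s$) and hence the whole transfer argument requires $s$ to be small compared with $|y-x^*|$, i.e.\ small compared with the scale $r_E=1$ of $E$ — not merely $s<r$. For $r$ large, $s$ in your range can vastly exceed $\diam(E)$, making the comparison vacuous. The paper avoids this by instead using the mean-value form of Lemma \ref{lemma PerimDeficitExpression} to extract a single favorable $s<\epsilon(n)\min\{1,r\}$ (see \eqref{c1}), which simultaneously keeps $JT\ge 1/2$ and keeps the exceptional set $B_{3s}(x^*)$ genuinely small. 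A related symptom of the gap is that your conclusion for $r\ge 1$ comes out as $r^{n-2}\delta_r(E)\ge c(n)\alpha(E)^2$, a strictly stronger bound than the $r^{n-1}$ in the statement; this apparent improvement traces directly to the unjustified uniform-in-$s$ comparison.
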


The following lemma is a preliminary step to the proof of Theorem \ref{thm rconvex 1}. Recall that
\[
\g_E(s)=P(E+B_s)-P(B_{r_E+s})\,,
\]
is non-negative for every $s>0$ thanks to the Brunn-Minkowski and isoperimetric inequalities.

\begin{lem}\label{lemma PerimDeficitExpression}
If $E$ is compact, then $E+B_s$ has finite perimeter for a.e. $s>0$ and
\begin{equation}
  \label{deltarE lower bound}
  \delta_r(E)= c(n)\,\frac{\max\{r_E/r,r/r_E\}}{(r_E+r)^n}\,\int_0^r \g_E(s)\,ds\,,
\end{equation}
with $\g_E(s)\ge c(n)\,|E+B_s|^{(n-1)/n}\,\de_{{\rm iso}}(E+B_s)$ for every $s\ge0$.
\end{lem}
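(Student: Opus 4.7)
My plan is to reduce the left-hand side of \eqref{deltarE lower bound} to an integral of $\gamma_E$ via the coarea formula. Since $E$ is compact, the distance function $u(x):=\dist(x,E)$ is $1$-Lipschitz with $|\nabla u|=1$ a.e.\ on $\{u>0\}$ and $\{u<s\}=E+B_s$ for every $s>0$, while $\{u=0\}=E$ (so $|\{u=0\}|=|E|$). Applying coarea to $u$ on the bounded set $\{0<u<r\}$ gives
\[
|E+B_r|-|E|=|\{0<u<r\}|=\int_0^r \mathcal H^{n-1}(\{u=s\})\,ds.
\]
A standard consequence of coarea combined with the Fleming--Rishel formula is that for a.e.\ $s>0$ the open set $E+B_s=\{u<s\}$ has finite perimeter and $P(E+B_s)=\mathcal H^{n-1}(\{u=s\})$; hence
\[
|E+B_r|-|E|=\int_0^r P(E+B_s)\,ds.
\]

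Applying this same identity to the ball $B_{r_E}$, noting $B_{r_E}+B_s=B_{r_E+s}$, and subtracting (which is legitimate because $|E|=|B_{r_E}|$) yields
\[
|I_r(E)|-|I_r(B_{r_E})|=\int_0^r\gamma_E(s)\,ds.
\]
Dividing by $|I_r(B_{r_E})|=|B_1|\,(r_E+r)^n$ and multiplying by $\max\{r/r_E,r_E/r\}$ produces \eqref{deltarE lower bound} with $c(n)=1/|B_1|$.

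For the pointwise lower bound on $\gamma_E$, I combine the Brunn--Minkowski inequality
\[
|E+B_s|^{1/n}\ge|E|^{1/n}+|B_s|^{1/n}=(r_E+s)\,|B_1|^{1/n}=|B_{r_E+s}|^{1/n}
\]
with the identity $P(G)=n|B_1|^{1/n}|G|^{(n-1)/n}(1+\delta_{\mathrm{iso}}(G))$. Applied to $G=E+B_s$ and $G=B_{r_E+s}$ (for which $\delta_{\mathrm{iso}}=0$), and using that $|E+B_s|^{(n-1)/n}\ge|B_{r_E+s}|^{(n-1)/n}$, these combine to give
\[
\gamma_E(s)\ge n|B_1|^{1/n}\,|E+B_s|^{(n-1)/n}\,\delta_{\mathrm{iso}}(E+B_s),
\]
as claimed. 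The only subtlety in the argument is the a.e.\ identification $P(E+B_s)=\mathcal H^{n-1}(\{u=s\})$: the inclusion $\partial^\ast(E+B_s)\subseteq\{u=s\}$ holds pointwise, and the reverse measure-theoretic equality for a.e.\ $s$ follows by comparing the total variation $\int P(\{u<s\})\,ds$ given by Fleming--Rishel with the integrated level-set measure $\int\mathcal H^{n-1}(\{u=s\})\,ds$ coming from coarea.
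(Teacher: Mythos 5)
Your proof is correct and follows essentially the same route as the paper's, which cites \cite[Lemma 2.1]{carlenmaggi} for the Steiner-type identity $|E+B_t|=|E|+\int_0^t P(E+B_s)\,ds$ and leaves the pointwise estimate on $\g_E$ implicit. You simply make the argument self-contained by deriving that identity from the coarea and Fleming--Rishel formulas, and by writing out the elementary computation $\g_E(s)\ge n|B_1|^{1/n}|E+B_s|^{(n-1)/n}\,\de_{{\rm iso}}(E+B_s)$ from the definition of $\de_{{\rm iso}}$ together with Brunn--Minkowski; both pieces are standard and correctly executed.
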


\begin{proof}
By \cite[Lemma 2.1]{carlenmaggi} we have $|E+B_t|=|E|+\int_0^tP(E+B_s)\,ds$ for every $t>0$,
hence $E+B_s$ has finite perimeter for a.e. $s>0$ and
\[
|E+B_r|-|B_{r_E+r}|=\int_0^r \gamma_E(s)\,ds\,.
\]
Therefore
$$
\delta_r(E) = \max\bigg\{\frac{r}{r_E}, \frac{r_E}{r}\bigg\} \frac{1}{|B_{r_E+r}|}\int_0^r \gamma_E(s)\,ds\,.$$
\end{proof}

\begin{proof}
  [Proof of Theorem \ref{thm rconvex 1}] If $E$ is $r$-convex, then $\l\,E$ is $\l\,r$-convex for every $\l>0$. Since $\de_r(E)=\de_{r/r_E}(E/r_E)$ where $|E/r_E|=|B_1|$, up to replacing $E$ by $E/r_E$ it is enough to show that if $r>0$ and $E$ is an $r$-convex set with $|E|=|B_1|$, then
  \begin{equation}
    \label{xyz}
      C(n)\,\max\{1,r^{n-1}\}\,\de_r(E)\ge\a(E)^2\,.
  \end{equation}
  Since $\a(E)<2$, without loss of generality we can assume that
  \[
  \de_r(E)<\frac{\e(n)}{\max\{1,r^{n-1}\}}\le 1
  \]
  for a constant $\e(n)$ to be suitably chosen in the proof. Since $E$ is bounded and $r$-convex we have $E=\co_r(E)$ and thus we can consider the sets $F_N$ introduced in the proof of Lemma \ref{lemma AreaBound} and in Remark \ref{remark FN} (see \eqref{FN}).   We have $|F_N|\ge |B_1|$ and, for $N$ large enough,
  \begin{equation}
    \label{delta FN small}
      \de_r(F_N)<\frac{\e(n)}{\max\{1,r^{n-1}\}}\le 1\,.
  \end{equation}
  Defining
    \[
  \g_N(s):=P(G_s)-P(B_{r_{F_N}+s})\,,\qquad G_s:=F_N+B_s\,,\qquad s\in(0,r)\,,
  \]
 it follows by Lemma \ref{lemma PerimDeficitExpression} applied to $F_N$ that
  \[
  C(n)\,\max\{r,r_{F_N}\}^{n-2}\,\delta_r(F_N) \geq  \frac1r\int_0^r \g_N(s)\,ds\,,\quad \g_N(s)\ge c(n)\,\de_{{\rm iso}}(G_s)\,|G_s|^{(n-1)/n}\quad
  \forall \,s\in(0,r)\,.
  \]
  Since $r_{F_N}\to r_E=1$, there exists $s<\e(n)\,\min\{1,r\}$ such that
  \begin{equation}
    \label{c1}
      C(n)\,\max\{1,r^{n-1}\}\,\de_r(F_N)\ge\g_N(s)\ge c(n)\, \de_{{\rm iso}}(G_s)|G_s|^{(n-1)/n} \ge c(n)\,\beta^*(G_s)^2|G_s|^{(n-1)/n}\,,
  \end{equation}
  where in the last inequality we have used \eqref{FJIsoperimetric}. Up to a translation we assume that
  \[
  \beta^*(G_s)^2=\bigg(\frac{|G_s\Delta B_{r_{G_s}}|}{|G_s|}\bigg)^2+\frac1{|G_s|^{(n-1)/n}}\,\int_{\pa^*G_s}\Big|\nu_{G_s}(y)-\frac{y}{|y|}\Big|^2\,d\H^{n-1}_y\,.
  \]
  Let us notice that
  \[
  \pa^*F_N=\bigcup_{i\in I_N}(B_R\cap\S_i)\cup \Big(\bigcap_{i\in I_N}\ov{B_r(x_i)}^c\cap\pa B_R\Big)
  \]
  with
  \begin{eqnarray*}
  \nu_{F_N}(x)=-\frac{x-x_i}{|x-x_i|}\,,&&\quad\mbox{$\H^{n-1}$-a.e. on $\pa^*F_N\cap\S_i$\,,}
  \\
  \nu_{F_N}(x)=\frac{x}{|x|}\,,&&\quad\mbox{$\H^{n-1}$-a.e. on $\pa^*F_N\cap\pa B_R$}\,.
  \end{eqnarray*}
Recalling that $R>1$, we see that map $T:\pa^*F_N\to\pa^*G_s$ defined as
  \[
  T(x):=x+s\,\nu_{F_N}(x)\qquad\forall \,x\in \pa^*F_N\,,
  \]
  is injective and satisfies
  \[
  \nu_{G_s}(T(x))=\nu_{F_N}(x)\qquad \frac{|T(x)-T(y)|}{|x-y|}\ge \left\{
  \begin{split}
    1-C\,s/r &\qquad\forall \,x,y\in B_R\cap \pa^*F_N
    \\
    1-C\,s/R &\qquad\forall \,x,y\in\pa B_R\cap \pa^*F_N\,.
  \end{split}
  \right .
  \]
  Since $s\le\e(n)\,\min\{1,r\}$ we thus find that the tangential Jacobian $JT:\pa^*F_N\to\R$ of $T$ along $\pa^*F_N$ satisfies
  $JT\ge 1/2$, and thus by the area formula
  \begin{eqnarray}\nonumber
  \int_{\pa^*G_s}\Big|\nu_{G_s}(y)-\frac{y}{|y|}\Big|^2\,d\H^{n-1}_y&\ge&
  \int_{\pa^*F_N}\Big|\nu_{G_s}(T(x))-\frac{T(x)}{|T(x)|}\Big|^2\,JT(x)d\H^{n-1}_x
  \\\label{c2}
  &&\ge \frac12\,
    \int_{\pa^*F_N}\Big|\nu_{F_N}(x)-\frac{T(x)}{|T(x)|}\Big|^2\,d\H^{n-1}_x\,.
  \end{eqnarray}
  Now it is easily seen that if $|e|=1$ and $|z|>2s>0$, then
  \[
  \Big|\frac{z}{|z|}-\frac{z+s\,e}{|z+s\,e|}\Big|\le C(n)\,\min\Big\{\Big|\frac{z}{|z|}-e\Big|,\Big|\frac{z}{|z|}+e\Big|\Big\}\,.
  \]
  Hence, by taking $z=x+s\,\nu_{F_N}(x)=T(x)$ and $e=-\nu_{F_n}(x)$, we  find that
  \begin{equation}
    \label{this last}
      \Big|\frac{x}{|x|}-\frac{T(x)}{|T(x)|}\Big|
  \le C(n)\,   \Big|\frac{T(x)}{|T(x)|}- \nu_{F_N}(x)\Big|\qquad\forall \,x\in\pa^*F_N\,,|x|>3\,s\,.
  \end{equation}
  We now split the argument in two cases.

  \bigskip

  \noindent {\it Case one}: We assume that $|x|>3s$ for $\H^{n-1}$-a.e. $x\in\pa^*F_N$ and for infinitely many values of $N$. In this case, combining \eqref{this last} with \eqref{c1} and \eqref{c2} we obtain
  \[
  C(n)\,\max\{1,r^{n-1}\}\,\de_r(F_N)\ge
  \int_{\pa^*F_N}\Big|\nu_{F_N}(x)-\frac{x}{|x|}\Big|^2d\H^{n-1}_x\ge |F_N|^{(n-1)/n}\,\beta(F_N)^2
  \]
  that is, by \eqref{OscVsAsymmetry},
  \[
  C(n)\,\max\{1,r^{n-1}\}\,\de_r(F_N)\ge |F_N|^{(n-1)/n}\,\a(F_N)^2\,.
  \]
  Since $|F_N| \to |E| = |B_1|$ as $N\to\infty$, we conclude
  by \eqref{reduction to FN} that
  \[
  C(n)\,\max\{1,r^{n-1}\}\,\de_r(E)\ge |B_1|^{(n-1)/n}\,\a(E)^2\ge c(n)\,\a(E)^2\,.
  \]
  This completes the discussion of case one.

  \bigskip

  \noindent {\it Case two}: We assume that, for every $N$ large enough,
  \begin{equation}
    \label{case two hp}
      \H^{n-1}(\pa^*F_N\cap B_{3s})>0\,.
  \end{equation}
  Combining \eqref{c1}, \eqref{c2}, and \eqref{this last},  one finds
  \begin{equation}
    \label{case two start}
  \int_{\pa^*F_N}\Big|\nu_{F_N}(x)-\frac{x}{|x|}\Big|^2d\H^{n-1}_x \le 2\,\H^{n-1}(\pa^*F_N\cap B_{3s})
  + C(n)\, \max\{1,r^{n-1}\}\, \de_r(F_N)\,.
    \end{equation}
  Also, it follows by \eqref{case two hp} that there exists $x\in F_N\cap B_{3s}$.
  Thus, since $F_N=\co_r(F_N)$, it follows by \eqref{upper perimeter estimate} that ,  provided $\e(n)$ is small enough,
  \begin{equation}
    \label{combine with claim}
      \H^{n-1}(\pa^*F_N\cap B_{3s})\le \H^{n-1}(\pa^*F_N\cap B_{6s}(x))\le C(n)\,s^{n-1}\,.
  \end{equation}
  We now claim that \eqref{case two hp} implies
  \begin{equation}
    \label{claim}
      \int_{\pa^*G_s}\Big|\nu_{G_s}(x)-\frac{x}{|x|}\Big|^2d\H^{n-1}_x\ge c(n)\,s^{n-1}\,.
  \end{equation}
  Notice that, if we can prove \eqref{claim},
  then combining that estimate with \eqref{c1}, \eqref{combine with claim}, and \eqref{case two start}, we get
  \[
  \int_{\pa^*F_N}\Big|\nu_{F_N}(x)-\frac{x}{|x|}\Big|^2d\H^{n-1}_x\le
  C(n)\,\max\{1,r^{n-1}\}\,\de_r(F_N)\,,
  \]
  which allows us to conclude as in case one. Hence, to conclude the proof is enough to
prove \eqref{claim}.

To this aim we first notice that, setting for
   the sake of brevity $\sigma:=r_{G_s}$ (so that $\sigma\ge r_{F_N}\ge r_E=1$), then \eqref{delta FN small} and \eqref{c1} imply that
  \begin{equation}
    \label{Gs big in BR}
  |B_\sigma\setminus  G_s|\le|G_s\Delta B_\sigma|\le \sqrt{\e(n)}\,|B_\sigma|\,.
  \end{equation}
  Next we notice that there exists a ball $B_s(z_0)$ such that
  \begin{equation}
    \label{Bsz}
    B_s(z_0)\subset (G_s)^c\cap B_{6\,s}\,.
  \end{equation}
  Indeed, \eqref{case two hp} implies the existence of $x_0\in\pa^*F_N\subset\pa F_N$ with $|x_0|<3s$. Since $F_N$ is $r$-convex there exists $B_r(y_0)\subset (F_N)^c$ such that $|x_0-y_0|=r$. Since $B_{r-s}(y_0)\subset (G_s)^c$, $|y_0-x_0|=r$, and $|x_0|<3s$, we can easily find $z_0$ such that \eqref{Bsz} holds.

  Now let $\tau\in(0,1)$ be a parameter to be fixed later on and let $e=z_0/|z_0|$ if $z_0\ne 0$, or $e=e_1$ if $z_0=0$. Let
  us consider the spherical cap
  \[
  \S_\tau:=\Big\{x\in\pa B_s(z_0):\mbox{$x=z_0+s\,v$ with $v\cdot e=\cos\a$ for some $|\a|<\tau$}\Big\}\,.
  \]
  Given $x\in\S_\tau$ let
  \[
  t(x):=\inf\Big\{t>0:z_0+t\,\frac{x-z_0}s\in\pa^*G_s\Big\}\,,
  \]
  so that $t(x)\in(s,\infty]$, and let
  \[
  \S_\tau^*:=\Big\{x\in\S_\tau:t(x)<\sigma-6s\Big\}\,.
  \]
  In this way the map $\psi:\S_\tau^*\to\R^n$ defined as
  \[
  \psi(x):=z_0+\frac{t(x)}{s}(x-z_0)\qquad x\in\S_\tau^*
  \]
  satisfies $\psi(\S_\tau^*)\subset B_\sigma\cap\pa^*G_s$ with
  \begin{equation}
    \label{aaa}
      \nu_{G_s}(\psi(x))\cdot \frac{x-z_0}{s}\le 0\,,\qquad\forall \,x\in\S_\tau^*\,,
  \end{equation}
  and a Taylor's expansion gives
  \begin{equation}
    \label{bbb}\Big|\frac{\psi(x)}{|\psi(x)|}-\frac{x-z_0}{s}\Big|\le C\,\tau\qquad\forall \,x\in\S_\tau^*\,.
  \end{equation}
  Hence, if $y\in\S_\tau^*$, then by \eqref{aaa} and \eqref{bbb}
  \[
  \frac12\Big|\nu_{G_s}(y)-\frac{y}{|y|}\Big|^2=1-\nu_{G_s}(\psi(x))\cdot\frac{\psi(x)}{|\psi(x)|}\ge 1-C\tau\ge \frac12\,
  \]
  (see Figure \ref{OscBoundPic}), therefore
    \begin{equation}
    \label{us}
      \int_{\pa^*G_s}\Big|\nu_{G_s}(y)-\frac{y}{|y|}\Big|^2\,d\H^{n-1}_y\ge\H^{n-1}(\S_\tau^*)\,.
  \end{equation}
 \begin{figure}
 \centering
    \includegraphics[scale=0.35]{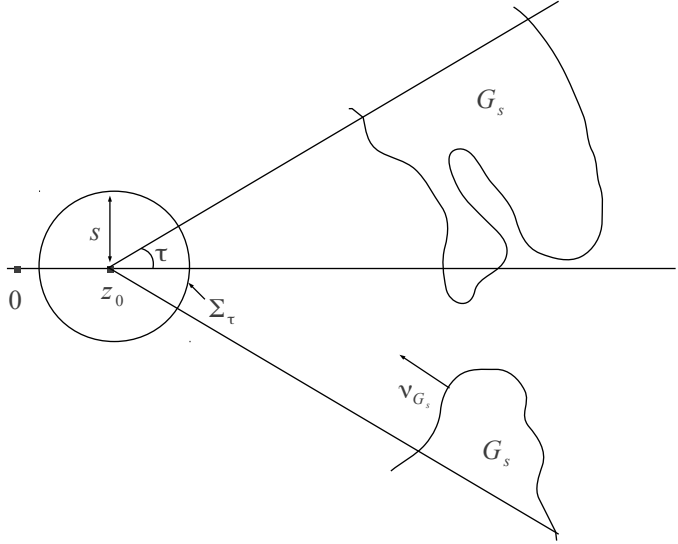}
 \caption{{\small On the part of $\partial G_s$ that is accessible to rays from $z_0$ through $\Sigma_{\tau}$, the normal $\nu_{G_s}$ is far from $x/|x|$.}}
\label{OscBoundPic}
\end{figure}

On the other hand we have
  \[
  Z:=\Big\{z_0+t\,\frac{x-z_0}s:x\in\S_\tau\setminus\S_\tau^*\,,t\in(0,S-6s)\Big\}\subset B_\sigma\setminus G_s\,,
  \]
  so it follows by \eqref{Gs big in BR} that
  \[
  C(n)\,\sqrt{\e(n)}\,\sigma^n\ge|Z|=\int_0^{\sigma-6s}\,\rho^{n-1}\,\H^{n-1}\Big(\frac{\S_\tau\setminus\S_\tau^*}s\Big)\,d\rho=
  \frac{(\sigma-6s)^n}{n\,s^{n-1}}\H^{n-1}(\S_\tau\setminus\S_\tau^*)
  \]
  that is $\H^{n-1}(\S_\tau\setminus\S_\tau^*)\le C(n)\,\sqrt{\e(n)}\,s^{n-1}$. Since $\H^{n-1}(\S_\tau)\ge c(n,\tau)\,s^{n-1}$, choosing $\e$ small enough we conclude that $\H^{n-1}(\S_\tau^*)\ge c(n)\,s^{n-1}$,
  that combined with \eqref{us}
  concludes the proof of \eqref{claim}.
\end{proof}

\section{A stability estimate for $r/r_E$ large}\label{section large} We now address the case when $r/r_E$ is large.

\begin{thm}\label{thm rlarge}
  There exists $C(n)$ such that if $E$ is a measurable set with $0<|E|<\infty$ and $r\ge C(n)\,r_E$, then
  \[
  C(n)\,\de_r(E)\ge\a(E)^2\,.
  \]
\end{thm}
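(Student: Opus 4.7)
The plan is to combine the reduction to $r$-convex sets (Proposition \ref{prop EnvelopeReduction}) with Federer's Steiner formula (Theorem \ref{SteinerFormula}), the polynomial compactness Lemma \ref{PolyLem}, the sharp quantitative isoperimetric inequality \eqref{sharp quantitative isoperimetric inequality}, and the Brunn--Minkowski inequality.

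After rescaling I assume $r_E=1$. Since $\alpha(E)<2$ I may also assume $\delta_r(E)\le \varepsilon(n)$ for any prescribed dimensional constant (otherwise there is nothing to prove). Proposition \ref{prop EnvelopeReduction} lets me replace $E$ by $\co_r(E)$ and thus assume $E$ is $r$-convex. I then claim that in this regime $E$ satisfies three rigidity properties: $\diam(E)\le R(n)$, an interior ball $B_{\rho(n)}(x_0)\subset E$ (so I translate to $x_0=0$), and $\mathrm{reach}(E)\ge 1$ in Federer's sense. The diameter bound comes from a Brunn--Minkowski comparison exploiting $\delta_r(E)\le\varepsilon(n)$ and $r\gg 1$: a set with $\diam(E)\gg R(n)$ generates an $r$-neighborhood whose volume exceeds $|B_{1+r}|$ by more than $\varepsilon(n)$ allows. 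Given the diameter bound, $r$-convexity with $r$ much larger than $R(n)$ plus John's ellipsoid lemma produces the interior ball. Finally, the exterior ball condition of radius $r\gg 1$ combined with the two-sided ball sandwich forces $\partial E$ to be a radial Lipschitz graph over $S^{n-1}$ and implies $\mathrm{reach}(E)\ge 1$.

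Once $\mathrm{reach}(E)\ge 1$, Federer's Steiner formula makes $s\mapsto P(E+B_s)$ a polynomial of degree at most $n-1$ on $[0,1]$, hence so is
\[
\gamma_E(s)=P(E+B_s)-P(B_{1+s}),
\]
which is non-negative on $[0,\infty)$ by \eqref{euclidean concentration} and the isoperimetric inequality. Lemma \ref{PolyLem}, a finite-dimensional compactness statement within the cone of non-negative polynomials of degree at most $n-1$ on $[0,1]$, yields
\[
|E+B_1|-|B_2|=\int_0^1 \gamma_E(s)\,ds\ge c(n)\,\gamma_E(0)=c(n)\,P(B_1)\,\delta_{\mathrm{iso}}(E)\ge c(n)\,\alpha(E)^2,
\]
where the last inequality is \eqref{sharp quantitative isoperimetric inequality}. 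Because the diameter control keeps $|E+B_1|$ in a dimensional interval bounded away from $0$ and $\infty$, the same lower bound transfers to $n$-th roots: $|E+B_1|^{1/n}-|B_2|^{1/n}\ge c(n)\,\alpha(E)^2$.

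To conclude I propagate from scale $1$ to scale $r$ via Brunn--Minkowski. Writing $E+B_r=(E+B_1)+B_{r-1}$ gives
\[
|E+B_r|^{1/n}\ge|E+B_1|^{1/n}+(r-1)|B_1|^{1/n}\ge|B_{1+r}|^{1/n}+c(n)\,\alpha(E)^2,
\]
and Bernoulli's inequality $y^n-1\ge n(y-1)$ applied to $y=(|E+B_r|/|B_{1+r}|)^{1/n}\ge 1$ converts this into $|E+B_r|/|B_{1+r}|-1\ge c(n)\,\alpha(E)^2/(1+r)$. Multiplying by $r\ge C(n)\ge 1$ yields $\delta_r(E)\ge c(n)\,\alpha(E)^2$, as desired. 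The main obstacle in this plan is the rigidity step in the second paragraph: quantitatively upgrading $r$-convexity (which only supplies an exterior ball condition at the very large scale $r$) together with the two-sided ball sandwich into the reach hypothesis on the unit interval required by Federer's Steiner formula.
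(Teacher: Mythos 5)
Your outline matches the paper's proof step for step: reduce to $r$-convex sets via Proposition \ref{prop EnvelopeReduction}, establish the two-sided sandwich $B_{\rho(n)}\subset E\subset B_{R(n)}$ from a Brunn--Minkowski diameter bound and John's lemma, upgrade to positive reach $\ge 1$, then apply Federer's Steiner formula and Lemma \ref{PolyLem} to get $\int_0^1\gamma_E(s)\,ds\ge c(n)\gamma_E(0)$, feed this into \eqref{sharp quantitative isoperimetric inequality}, and push from scale $1$ to scale $r$ by Brunn--Minkowski. The conclusion is carried out correctly. However, the step you flag as the main obstacle is a genuine gap, not merely a missing detail: you assert that the exterior ball condition at scale $r$ together with $B_{\rho(n)}\subset E\subset B_{R(n)}$ forces $\partial E$ to be a radial Lipschitz graph and hence $\mathrm{reach}(E)\ge 1$, but neither implication is automatic. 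A radial Lipschitz graph need not have positive reach (it can have inward corners), and the exterior tangent balls of radius $r$ could a priori be almost tangent to a $2$-plane $\Pi$ through the origin, producing only tiny exterior tangent discs in $\Pi\cap E^c$ rather than discs of radius comparable to $r$.

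The paper closes this gap with three further steps that you would need to supply. First, using the sandwich $B_{\rho(n)}\subset E\subset B_{R(n)}$ and a quantitative triangle-inequality computation, it shows that for every $2$-plane $\Pi$ through the origin and every $x\in\Pi\cap\partial E$, the slice $B_r(z)\cap\Pi$ of an exterior tangent ball is a disc of radius at least $\delta(n)\,r$ touching $x$ (Step three); this is precisely the point where the $2$-plane must be chosen through the origin so the interior ball forces the center $z$ to be reasonably aligned with $\Pi$. Second, it checks that $\partial E\cap\Pi$ is a radial graph (Step four). Third, it runs a sliding argument (Step five): if reach $<1$, there is an exterior tangent ball of radius $t_0<1$ touching $\partial E$ at $y_1,y_2$; passing to the plane $\Pi$ through $y_1,y_2,0$, one slides a disc of radius $t_0$ from inside $E$ along the axis bisecting the sector bounded by the rays through $y_1,y_2$, until it first touches $\partial E\cap\Pi$ at a new point $y_3$. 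But by Step three (with $r$ large so that $\delta(n)r\ge 2$) there is an exterior tangent disc of radius $2>t_0$ at $y_3$, which must intersect the sliding family on the $E$-side, a contradiction. Without this sliding contradiction, or some replacement for it, the assertion $\mathrm{reach}(E)\ge 1$ does not follow from the hypotheses you list.
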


The argument exploits the notion of sets of positive reach and the corresponding {\it Steiner's formula}. Let us recall that if $r>0$ and $E$ is a closed subset of $\R^n$, then $E$ has {\it positive reach $r$} if for every $x$ with $\dist(x,E)<r$ there exists a unique closest point to $x$ in $E$. This property allows one to exploit the area formula to deduce that $P(E+B_s)$ is a polynomial for $s\in[0,r]$.

\begin{thm}[Federer \cite{FedererCURVMEASURES}]\label{SteinerFormula}
 If $E$ has positive reach $r$, then $s\mapsto P(E+B_s)$ is a polynomial of degree at most $n-1$ on the interval $[0,r]$.
\end{thm}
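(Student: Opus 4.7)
My plan is to first prove the companion statement that $s \mapsto |E+B_s|$ is a polynomial of degree at most $n$ on $[0,r]$, from which the theorem follows by differentiation via the identity $|E+B_t| = |E| + \int_0^t P(E+B_s)\,ds$ of \cite[Lemma 2.1]{carlenmaggi}.

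The setup exploits the positive reach of $E$: every $x \in I_r(E)$ has a unique closest point $\pi_E(x) \in E$, and the distance function $d_E$ is $C^{1,1}$ on $I_r(E) \setminus E$. I would introduce the unit normal bundle
\[
N(E) := \{(y,v) \in \partial E \times S^{n-1} : \pi_E(y+tv) = y \text{ for all } t \in [0,r]\},
\]
together with the ``exponential'' map $\Psi: N(E) \times (0,r) \to I_r(E)\setminus E$ defined by $\Psi(y,v,t) := y + tv$, which the positive-reach hypothesis renders a bijection.

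The key geometric input, following Federer, is that $N(E)$ is an $(n-1)$-rectifiable subset of $\R^n \times \R^n$, carrying an approximate tangent plane $\H^{n-1}$-a.e., and that in these coordinates together with the $t$-direction the differential of $\Psi$ has the form $(\xi,\eta,\tau) \mapsto \xi + t\eta + \tau v$, which is affine in $t$. Consequently the Jacobian $J\Psi(y,v,t)$ is a polynomial in $t$ of degree at most $n-1$ (classically, up to a $t$-independent factor it equals $\prod_{i=1}^{n-1}(1+t\kappa_i(y,v))$, where the $\kappa_i$ are the generalized principal curvatures at $(y,v)$). The area formula then gives
\[
|E+B_s| - |E| \,=\, \int_0^s \int_{N(E)} J\Psi(y,v,t) \, d\H^{n-1}(y,v) \, dt \,=\, \int_0^s Q(t)\, dt,
\]
with $Q(t) := \int_{N(E)} J\Psi(y,v,t)\,d\H^{n-1}(y,v)$ a polynomial of degree at most $n-1$ in $t$. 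Thus $s \mapsto |E+B_s|$ is a polynomial of degree at most $n$ on $[0,r]$, and differentiating the Carlen--Maggi identity yields $P(E+B_s) = \tfrac{d}{ds}|E+B_s|$ a.e., hence the claim.

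The main obstacle is establishing the $(n-1)$-rectifiability of $N(E)$ and the affine-in-$t$ structure of $d\Psi$ for general sets of positive reach. This is precisely what Federer's theory of the generalized second fundamental form on the normal bundle provides; its spectrum (the generalized principal curvatures) may include the value $+\infty$ at ``edges'' where the normal cone is multidimensional, but after the substitution that replaces the factors $1+t\kappa_i$ by $\cos\theta_i + t\sin\theta_i$ one sees that the polynomial-in-$t$ structure persists and the integrand remains bounded. Carrying out these details is the content of \cite{FedererCURVMEASURES}.
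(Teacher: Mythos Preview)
The paper does not supply its own proof of this theorem; it is stated with attribution to Federer \cite{FedererCURVMEASURES} and used as a black box. Your sketch is a faithful outline of Federer's original argument (unit normal bundle, rectifiability, affine-in-$t$ Jacobian of the normal exponential map, area formula), so there is nothing to compare: you have gone further than the paper by indicating how the cited result is actually proved, and the outline is correct.
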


A second tool used in our argument is the following elementary lemma.
\begin{lem}\label{PolyLem}
If $p$ is a non-negative polynomial on $[0,1]$, then
$$
\int_0^1 p(x)\,dx \geq c\,p(0)
$$
where $c$ is a positive constant depending only on the degree of $p$.
\end{lem}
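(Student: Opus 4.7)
The plan is to reduce the statement to the equivalence of norms on the finite-dimensional space of polynomials of degree at most $n$. Let $\mathcal{P}_n$ denote the space of real polynomials of degree at most $n$, which has dimension $n+1$. On $\mathcal{P}_n$, I will first observe that
\[
\|p\|_{L^1} := \int_0^1 |p(x)|\,dx
\]
defines a genuine norm: any polynomial of degree $\leq n$ that vanishes on a set of positive Lebesgue measure (in particular, on $[0,1]$) has infinitely many zeros and must therefore vanish identically.

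Next I would observe that the evaluation map $p \mapsto |p(0)|$ is a continuous seminorm on $\mathcal{P}_n$ (it is trivially continuous with respect to the coefficient norm, hence with respect to any norm by finite-dimensionality). Since all norms on a finite-dimensional vector space are equivalent and any continuous seminorm is dominated by any norm, there exists a constant $C = C(n)$ such that
\[
|p(0)| \leq C \int_0^1 |p(x)|\,dx \qquad \forall\, p \in \mathcal{P}_n.
\]
Finally, if $p$ is non-negative on $[0,1]$, then $|p(x)| = p(x)$ throughout $[0,1]$ and $|p(0)| = p(0)$, so the preceding display gives $p(0) \leq C \int_0^1 p(x)\,dx$, which is the desired conclusion with $c = 1/C$.

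There is no real obstacle here, since the argument is pure finite-dimensional functional analysis; the only point requiring a remark is that $\|\cdot\|_{L^1}$ is a \emph{norm} rather than a mere seminorm, which is why the domination of $|p(0)|$ by $\|p\|_{L^1}$ is possible. One could alternatively give a more constructive proof by expanding $p$ in the basis of shifted Legendre polynomials on $[0,1]$ and writing both $p(0)$ and $\int_0^1 p$ explicitly as linear functionals of the coefficients, but the norm-equivalence argument above is the shortest route and immediately delivers a dimensional constant $c = c(n)$.
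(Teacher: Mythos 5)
Your proof is correct, and it takes a genuinely different (and shorter) route than the paper's. The key observation is that the estimate
\[
|p(0)| \le C(n)\int_0^1 |p(x)|\,dx
\]
holds for \emph{all} $p$ of degree at most $n$, not only the non-negative ones: on the $(n+1)$-dimensional space $\mathcal P_n$, the $L^1([0,1])$ integral defines a norm (a polynomial of degree $\le n$ that is zero a.e.\ on $[0,1]$ has infinitely many roots, hence is identically zero), and the evaluation map $p\mapsto p(0)$ is a linear functional on a finite-dimensional normed space, hence automatically bounded. Non-negativity is used only at the very end to replace $\int|p|$ by $\int p$. The paper proceeds quite differently: it normalizes $p(0)=1$, subtracts the largest linear function $\alpha x$ staying below $p$, deduces that the resulting polynomial either vanishes at $x=1$ or has a double root in $(0,1)$, and iterates to reduce to a polynomial of the form $c\prod_{i=1}^N|x-b_i|$ with $c\ge 1$; the bound then follows from compactness of $[0,1]^N$ and strict positivity of $(b_1,\dots,b_N)\mapsto \int_0^1\prod_i|x-b_i|\,dx$. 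Both proofs ultimately rest on a compactness argument (yours is hidden in the norm-equivalence/boundedness statement), but yours dispenses with the factoring reduction entirely. The paper's version has the minor advantage of isolating exactly which extremal polynomials matter, but for the purposes of the application the two are interchangeable.
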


\begin{proof} Let us assume without loss of generality that $p(0)=1$ and set $N={\rm deg}(p)$. Let $\a\ge 0$ be the largest slope such that $\a\,x\le p(x)$ for every $x\in[0,1]$. Since $p(x)-\a\,x$ and $p(x)$ are both non-negative polynomials on $[0,1]$ with same value at $x=0$, we can replace $p(x)$ with $p(x)-\a\,x$. In doing so we gain the information that our polynomial has either a zero at $x=1$, or a zero of order at least two at $x=a$ for some $a\in(0,1)$. In particular, either $p(x)=q(x)\,(1-x)$ or $p(x)=q(x)\,(x-a)^2$, where $q$ is a non-negative polynomial in $[0,1]$ with $q(0)\ge 1$ and degree strictly less than the degree of $p$. By iterating the procedure we reduce ourselves to the case where,
for some $0 \leq k \leq N/2$,
$$
p(x) = c\,(1-x)^{N-2k}\,\prod_{i=1}^k(x-a_i)^2\,,
$$
where $\{a_i\}_{i=1}^k\subset(0,1)$ and $c\ge 1$ (since $p(0)\ge 1$). In particular,
\[
\int_0^1\,p(x)\,dx\ge \min_{(b_1,...,b_N) \in [0,1]^N} f(b_1,...,b_N)\,, \qquad \mbox{where} \quad f(b_1,...,b_N) := \int_{0}^1 \prod_{i=1}^N |x-b_i|\,dx\,.
\]
Clearly $f$ is continuous and strictly positive on $[0,1]^N$. By compactness,
\[
\min_{(b_1,...,b_N) \in [0,1]^N} f(b_1,...,b_N) > c(N)\,,
\]
and the proof is complete.
\end{proof}

We are now ready to prove Theorem \ref{thm rlarge}.

\begin{proof}
  [Proof of Theorem \ref{thm rlarge}] We can directly assume that $E$ is $r$-convex with
  \[
  |E|=|B_1|\,,\qquad \de_r(E)<1\,,\qquad r\ge C(n)\ge 1\,.
  \]
  We claim that
  \begin{equation}
    \label{final claim}
    \mbox{$E$ has positive reach $1$}\,.
  \end{equation}
  This claim allows one to quickly conclude the proof. Indeed, it follows by Theorem \ref{SteinerFormula} that,
   for $s < 1$, the perimeter deficit $\gamma_E(s)$ is a nonnegative polynomial of
 degree at most $n-1$, so we can apply Lemma \ref{PolyLem} to conclude that
 $$|E + B_1| - |B_2| = \int_0^1 \gamma_E(s)\,ds \geq c(n)\,\gamma_E(0)\,.$$
 In particular, applying \eqref{sharp quantitative isoperimetric inequality} we obtain
 $$|E + B_1| \geq |B_2| + c(n)\,\alpha(E)^2 \geq |B_{2 + c(n)\,\alpha(E)^2}|,$$
 since $\alpha(E) \leq 2$. Since $r > 1$ by assumption, it follows from the Brunn-Minkowski inequality that
 $$
 |E + B_r|=|(E+B_1)+B_{r-1}|\ge(|E+B_1|^{1/n}+|B_{r-1}|^{1/n})^n \geq |B_{1 + r + c(n)\,\alpha(E)^2}|\,$$
hence
 $$\delta_r(E) \geq c(n)\,\frac{r}{1+r}\,\alpha(E)^2 \geq c(n)\,\alpha(E)^2,$$
 which is the desired inequality. Hence, we are left to prove \eqref{final claim}.
 Before doing so, we first make some comments about the argument we just showed.

 \begin{rem}{\rm
 The proof above works for any set $E$ with reach $\min\{1,r\}$, for any $r>0$. In particular, it proves the main theorem in the case that $E$ is convex.
 In the case when $r$ is small one has that
 $$C(n)\,\delta_r(E) \geq \frac{1}{r}\int_{0}^r \gamma_E(s)\,ds.$$
 Since the integrand is a positive polynomial for $s < r$,
 the result follows immediately from Lemma \ref{PolyLem} and inequality \eqref{sharp quantitative isoperimetric inequality}.

Note however that, given a bounded set $E$,
for small $r$ we cannot affirm that $\co_r(E)$ has reach $r$. Take for example $B_2$ minus two small balls of radius $2r$ whose centers are at distance $4r-\delta$ with $\delta \ll r$. Then this set coincides with its
 $r$-envelope, but has reach that goes to $0$ as  $\delta\to 0$.}
\end{rem}

\begin{rem}\label{ConvexCaseRLarge}
 {\rm Our technique also gives an alternative proof of sharp stability for Brunn-Minkowski in the case that $E$ and $F$ are both convex.
 In this case one uses the existence of a Steiner polynomial for a certain weighted perimeter of $E + sF$ (see \cite{BuragoZalgallerBOOK}), and the sharp
 quantitative anisotropic isoperimetric inequality \cite{FigalliMaggiPratelliINVENTIONES}.}
\end{rem}

 We now prove \eqref{final claim}. We achieve this in five steps.

  \bigskip

  \noindent {\it Step one:} We prove that $\diam(E)\le R(n)$ for some $R(n)$. Indeed given two points $x,y\in E$, we find
  \[
  |E + B_r| \geq |B_r(x)\cup B_r(y)| \geq |B_r|\min\bigg\{1 + c_0(n)\,\frac{|x-y|}r, 2\bigg\}\,.
  \]
  Since $r\ge 1$ and thus $r/(1+r)\ge 1/2$, we get
 \begin{eqnarray}\label{step one}
 \delta_r(E)&\ge&\max\bigg\{r,\frac{1}{r}\bigg\}\,\bigg(\frac{|E+B_r|^{1/n}}{(1+r)|B_1|^{1/n}}-1\bigg)
 \\\nonumber
 &&\ge
 \frac12\left(\min\bigg\{r\,\Big(1 + c_0(n)\,\frac{|x-y|}r\Big)^{1/n}, 2^{1/n}\,r\bigg\} -1-r\right)\,.
 \end{eqnarray}
 If the minimum in the right hand side was achieved by $2^{1/n}r$ then we would find $2\,\de_r(E)\ge (2^{1/n}-1)\,r-1\ge 2$
 provided $r\ge C(n)$ for $C(n)$ large enough,
 which is impossible since $\de_r(E)<1$. Thus we must have that
 \[
 \Big(1 + c_0(n)\,\frac{|x-y|}r\Big)^{1/n}\le 2^{1/n}\,,\quad\mbox{that is}\quad c_0(n)\,\frac{|x-y|}r\le 1\,.
 \]
 Since $(1+s)^{1/n}\ge 1+c(n)\,s$ for every $s\in(0,1)$, we deduce that
 \[
 r\,\Big(1 + c_0(n)\frac{|x-y|}r\Big)^{1/n}-1-r\ge c(n)\,|x-y|-1\,,
 \]
 hence it follows by \eqref{step one} that
 $c(n)\,|x-y|\le 1+2\,\de_r(E)\le 3$, as desired.

 \bigskip

 \noindent {\it Step two}: By step one it follows that, up to a translation, $E\subset B_{R(n)}$. We now prove that (after possibly another translation)
 \[
 B_{r(n)}\subset E
 \]
 for some $r(n)>0$.

 Let $\co(E)$ denote the convex hull of $E$.
 By John's lemma there exists an affine transformation $L:\R^n\to\R^n$ such that
 \[
 B_1\subset L(\co(E))\subset B_n\,.
 \]
 Since $\co(E)\subset B_{R(n)}$ and $|E|=|B_1|\le|\co(E)|$ we deduce that $\Lip L\,,\Lip\,L^{-1}\le C(n)$. In particular,
 up to a translation, we deduce that
 \[
 B_{r(n)}\subset \co(E)
 \]
 for some $r(n)>0$. We now claim that if $r\ge C(n)$ for $C(n)$ large enough, then $B_{r(n)/2}\subset E$ too.

 Indeed, if not, there exists
 $x\in E^c\cap B_{r(n)/2}$. Since $E$ is $r$-convex, this means that $x\in B_r(y)$ for some $B_r(y)\subset E^c$,
 which implies that  (after possibly replacing $R(n)$ by $2R(n)$)
  \begin{figure}
 \centering
    \includegraphics[scale=0.3]{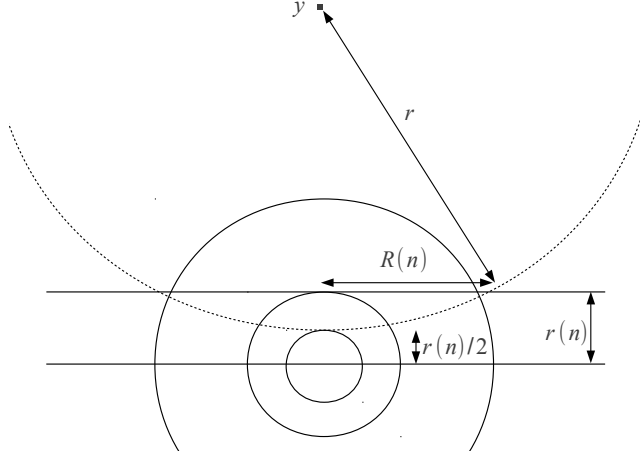}
 \caption{{\small If $r$ is large and an exterior ball $B_r(y)$ intersects $B_{r(n)/2}$, then $B_{r(n)}$ is not in the convex hull of $E$.}}
\label{InteriorBallPic}
\end{figure}
 \begin{equation}
   \label{hey}
 E\subset B_{R(n)}\setminus B_r(y)\,,\qquad\mbox{where}\quad |y|\le r+\frac{r(n)}2\,.
 \end{equation}
 Since $r\ge C(n)$ and $|y|\le r+r(n)/2$, we can pick $C(n)$ large enough with respect to $r(n)$ and $R(n)$ to ensure that
 \[
 B_{R(n)}\setminus B_r(y)\subset\Big\{z\in\R^n:z\cdot\frac{y}{|y|}\le \frac{2\,r(n)}3\Big\}\,.
 \]
 Recalling \eqref{hey}, this implies that
 \[
 \co(E)\subset\Big\{z\in\R^n:z\cdot\frac{y}{|y|}\le \frac{2\,r(n)}3\Big\}\,,
 \]
 against $B_{r(n)}\subset\co(E)$ (see Figure \ref{InteriorBallPic}).

 \bigskip

 \noindent {\it Step three}: We claim that there exists $\de(n)\in(0,1)$ with the following property: for every two-dimensional plane $\Pi$ through the origin and each
 $x\in\Pi\cap\pa E$, there exists a disk of radius $\de(n)\,r$ contained in $\Pi\cap E^c$ whose boundary contains $x$.

 Indeed, let $B_r(z)$ be an exterior tangent ball to $E$ that touches $x$. Choose coordinates so that $\Pi$ is the $e_1,\,e_2$ plane.
 By rotations in $\Pi$ and then in its orthogonal complement, we may assume that
 $z = z_1e_1 + z_ne_n$, with $z_i \geq 0$. Since $B_r(z) \cap B_{r(n)} = \emptyset$ (by Step two) we have
 $$z_1^2 + z_n^2 \geq (r+r(n))^2.$$
 On the other hand, since $x \in B_{R(n)}$ we know $\partial B_r(z)$ intersects the $x_1$ axis at some point $\alpha e_1$ with $r(n) \leq \alpha \leq R(n)$. We thus have
 $$(z_1 - \alpha)^2 + z_n^2 = r^2.$$
 Combining these we obtain
 $$z_1 \geq \frac{2r\,r(n) + r(n)^2 + \alpha^2}{2\alpha} > \frac{r(n)}{R(n)}r.$$
 Note that $B_r(z) \cap \Pi$ is a disc of radius $z_1-\alpha$ centered on the $x_1$ axis, and by the above we have
 $$z_1 - \alpha \geq r\left(\frac{r(n)}{R(n)} - \frac{R(n)}{r}\right) > \frac{r(n)}{2R(n)}\,r$$
 provided $r > 2R(n)^2/r(n)$, which proves our claim (see Figure \ref{BallSlicePic}).

\begin{figure}
 \centering
    \includegraphics[scale=0.3]{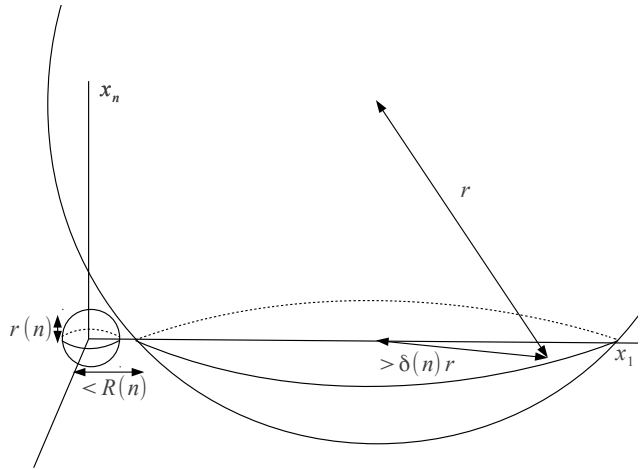}
 \caption{{\small When we restrict to a plane, $E$ has exterior tangent circles of radius comparable to $r$.}}
\label{BallSlicePic}
\end{figure}

 \bigskip

 \noindent {\it Step four}: We show that $\partial E \cap \Pi$ is a radial graph for any two-plane $\Pi$ through the origin. Indeed, recall that
 $B_{r(n)} \subset E$. Follow a ray from the origin in $\Pi$ until the first time it hits $\partial E$ at some point $x \in B_{R(n)}$. By the previous step we know that
 there is an exterior tangent disc in $\Pi$ of radius $\delta(n) r$ whose boundary contains $x$. Using that this disc does not intersect $B_{r(n)} \cap \Pi$
 and similar arguments to those in the previous step, one concludes that the radial line segment from $x$ to $\partial B_{R(n)}$ is in $E^c$ if $r$
 is sufficiently large, and since $E \subset B_{R(n)}$ the claim is established.

 \bigskip

 \noindent {\it Step five}: We finally prove \eqref{final claim}.
  In view of the previous steps we may take $r$ large so that, after a translation, $B_{r(n)} \subset E \subset B_{R(n)}$,
  and $\partial E$ restricted to any two-plane is a radial graph with external tangent circles of radius $2$.

 Assume by way of contradiction that $E$ does not have reach $1$. Then there is some exterior tangent ball of radius less than $1$ touching
 $\partial E$ at two points $y_1$ and $y_2$. Let $\Pi$ be the two-plane containing $y_i$ and the origin,
 and choose coordinates so that $\Pi$ is the $e_1,\,e_2$ plane.
 Then there is an exterior tangent disc
 of radius less than $1$ touching $\partial E \cap \Pi$ at $y_1$ and $y_2$. Denote by $D_t(x)$ the disk in $\Pi$ of radius $t$ centered at $x$.
Up to a rotation in $\Pi$, we can assume that both $y_1$ and $y_2$ touch an exterior tangent disc $D_{t_0}(\beta e_1)$, with $t_0 \leq 1$ and $\beta > t_0 + r(n)$
 (since $B_{r(n)} \subset E$).

 Now, let $S$ be the sector in $\Pi$ bounded by the rays from the origin through $y_i$,
and
 let $C_s$ be the radial graph given by the left part of $\partial D_{t_0}((t_0 + s)e_1) \cap S$, for $0 \leq s \leq \beta-t_0$.
 Note that $C_s \subset E \cap \Pi$ for all $s$ small (since $B_{r(n)} \subset E$). Furthermore, the endpoints of $C_s$ on $\partial S$ are
 in $E$ for all $0 \leq s < \beta - t_0$ since $\partial E \cap \Pi$ is a radial graph.
 Since $D_{t_0}(\beta e_1)$ is exterior to $E \cap \Pi$, we can increase $s$ until $C_s$ first touches $\partial E \cap \Pi$ for some
 $0 < s_0 \leq \beta - t_0$. (In particular, $C_s \subset E\cap \Pi$ for all $0 \leq s < s_0$.)
 Hence, this proves the existence of a point $y_3 \neq y_1,\,y_2$ which belongs to $C_{s_0} \cap \partial E$.

 Since $y_3 \in \partial E$, we know that there is an exterior tangent disc of radius $2$ whose boundary contains $y_3$.
 On the other hand, since $t_0 \leq 1$, any such disc will contain points
 in $C_s$ for some $s < s_0$, a contradiction that concludes the proof (see Figure \ref{PositiveReachPic}).

 \begin{figure}
 \centering
    \includegraphics[scale=0.35]{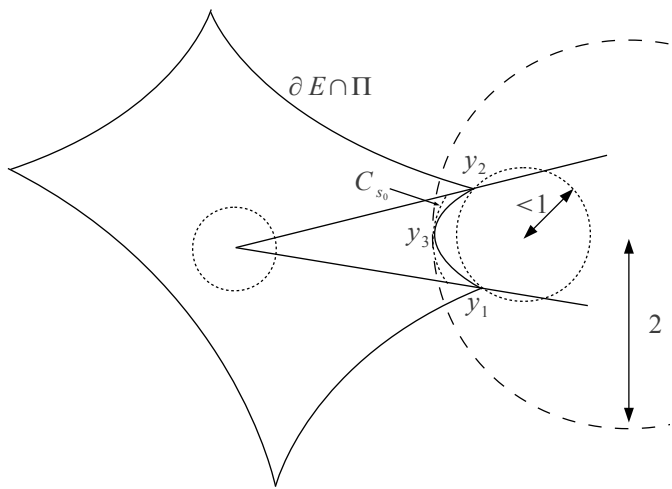}
 \caption{{\small If on some two-plane the radial graph $\partial E$ does not have reach $1$, we get a contradiction for $r$ large.}}
\label{PositiveReachPic}
\end{figure}

\end{proof}



\section*{Acknowledgments}
A. Figalli was supported by NSF Grants DMS-1262411 and DMS-1361122.
F. Maggi was supported by NSF Grants DMS-1265910 and DMS-1361122.
C. Mooney was supported by NSF grant DMS-1501152. C. Mooney would like to thank L. Caffarelli for a helpful conversation.

\bibliography{references}
\bibliographystyle{alpha}

\end{document}